\newtheorem{theorem}{Theorem}
\newtheorem{lemma}{Lemma}
\newtheorem{question}{Question}
\newtheorem{claim}{Claim}
\newtheorem{observation}{Observation}
\newtheorem{corollary}{Corollary}
\newtheorem{conjecture}{Conjecture}
\begin{document}

\tikzset{->-/.style={decoration={
  markings,
  mark=at position .5 with {\arrow{>}}},postaction={decorate}}}

\title{Independent Dominating Sets in Directed Graphs}
\author{Adam Blumenthal\thanks{Department of Mathematics, Iowa State University, Ames, IA, E-mail: {\tt ablument@iastate.edu}}}
\date{\vspace{-5ex}}

\maketitle

\begin{abstract}
    In this paper, we study independent domination in directed graphs, which was recently introduced by Cary, Cary, and Prabhu. 
    We provide a short, algorithmic proof that all directed acyclic graphs contain an independent dominating set.
    Using linear algebraic tools, we prove that any strongly connected graph with even period has at least two independent dominating sets, generalizing several of the results of Cary, Cary, and Prabhu.
    We show that determining the period of the graph is not sufficient to determine the existence of an independent dominating set by constructing a few examples of infinite families of graphs.
    We also show that the direct analogue of Vizing's Conjecture does not hold for independent domination number in directed graphs by providing two infinite families of graphs.
    
    We also initialize the study of time complexity for independent domination in directed graphs, proving that the existence of an independent dominating set in directed acyclic graphs and strongly connected graphs with even period are in the time complexity class $P$.
    We also provide an algorithm for determining existence of an independent dominating set for digraphs with period greater than $1$.

    \begin{flushleft}\textbf{Keywords:} Independent Sets, Dominating Sets, Independent Dominating Sets, Directed Graphs\end{flushleft}
    
\end{abstract}

\section{Introduction}

Both the dominating set problem and independent set problem have been studied extensively in graphs. 
Independence has been widely studied for its relation to chromatic number, while domination has a deep relationship with communication in networks.
The study of sets that are both independent and dominating (or independent dominating sets) has history dating back to 1862, when de Jaenisch \cite{DJ} asked for the minimum number of non-attacking queens which can be placed on a chessboard such that every other square is threatened.
We note also that both independence and domination are classic examples of $NP$-complete problems, as is finding the smallest independent dominating set \cite{GareyJohnson}. 
It has been proven that determining the minimum size of an independent dominating set is $NP$-complete even in restricted families including bipartite graphs or line graphs \cite{Manlove,YannakakisGavril,CorneilPerl}.
The minimum size of a dominating set is used as a measure of efficiency of backbones for communications networks, and independent domination can be used for communication networks in which interference or fading can occur.
Further results include Nordhaus-Gaddum type results \cite{GoddardHenning,GHLS12}, and results for claw-free graphs \cite{AllanLaskar}, as well as random graphs \cite{DuckworthWormald}. 
For a thorough survey of the history and results in independent domination theory, we direct the reader to the paper \cite{GoddardHenningSurvey}.

In directed graphs independence is no different from the question in undirected graphs. 
On the other hand dominating sets are drastically affected by direction.
There is a long history of dominating set problems in directed graphs, but frequently they are restricted to certain families of graphs.
In particular, domination in tournaments has been studied for decades, including questions of Erd\H{o}s \cite{Erdos} and Gy\'arf\'as \cite{Gyarfas}.
More recently, Caro and Henning \cite{CaroHenning} continued the study of dominating set theory in directed graphs, providing some general bounds as well as relating the directed domination number to the independence number in bipartite graphs.

In 2019, Cary, Cary, and Prabhu \cite{CaryPrabhu} introduced independent domination in directed graphs.
This problem has relations to finding communication points for information transmission, particularly when information can only be sent in one direction at a time in a network.
As such, they explore the parameter with respect to oriented graphs since they correlate to ad-hoc networks \cite{DEHH}.

We define a \emph{directed graph} $D = (V,A)$ to be an ordered pair, where $V$ is a set called vertices ($V(G)$) and $A$ is a set of pairs of vertices called the edge set or arc set ($A(G)$).
A set of vertices $S$ to be \emph{independent} in a directed graph $D$ if there does not exist $u, v \in S$ such that $(u,v)$ is an arc in $D$.
A set of vertices $S$ to be \emph{dominating} in a directed graph $D$ if for every $v \in V(G) \setminus S$ there exists some $v \in S$ such that $(u,v) \in A(D)$.
A set of vertices $S$ to be \emph{independent dominating} in a directed graph $D$ if $S$ is both independent and dominating.

Cary, Cary, and Prabhu \cite{CaryPrabhu} provide results on certain families of graphs including orientations of bipartite graphs and cycles as well as directed acyclic graphs.
In this paper, we extend the study of independent domination into directed graphs which allow antiparallel edges, noting that parallel edges do not affect independent domination in directed graphs.
All directed graphhs will be assumed to be finite.
We will provide a result which generalizes several of the results of the previous paper, namely determining idomatic number for directed graphs with certain periods.
We additionally provide some alternative, algorithmically focused, proofs of similar results to Cary, Cary, and Prabhu.
We also begin the study of time complexity of independent dominating sets, showing that determining the smallest size of an independent dominating set in a directed graph in is $NP$-complete and providing an algorithm which answers this question in $O(1.26^n)$ time when the period of the graph is not one.
Cary, Cary, and Prabhu also introduce the concept of idomatic number of a graph $G$, and explore the parameter in some families of graphs.
In the conclusion, we suggest possible avenues for furthering the theory of independent domination in directed graphs.

\begin{section}{A Greedy Heuristic}
In this section we will provide a simple heuristic for finding an independent dominating set, which gives some short alternative proofs to those given in \cite{CaryPrabhu}. Our goal throughout this section is to provide a tool for determining the existence of an independent dominating set in a directed graph, with the goal of classifying graphs which contain no independent dominating set which we call \emph{independent dominating set-free (IDS-free)}.

Note that in undirected graphs, there always exists an independent dominating set which can be made by greedily adding vertices until we reach a maximal independent set.
In directed graphs, this is not the case. 
Notice that, for example, a directed $3$-cycle has no independent dominating set. 
\begin{figure}
\begin{center}
\begin{tikzpicture}
  \foreach \i in {0,1,2}
     \node[circle, fill = black, scale = .5] (v\i) at ({(360/3 * \i) + 45}:1) {};
     
  \draw[->-, thick] (v1)--(v2);
  \draw[->-, thick] (v2)--(v0);
  \draw[->-, thick] (v0)--(v1);
\end{tikzpicture}
\caption{An example of a IDS-free digraph.}
\end{center}
\end{figure}
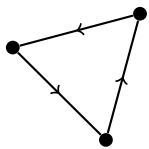
We seek to provide conditions for when a digraph $D$ has an independent dominating set.

In a directed graph $D$ we call a vertex $v$ a \emph{source} if it has $d^-(v) = 0$.
We define the \emph{source-greedy algorithm (SGA)} as follows: for $D$, while there exists a source in the graph, choose one to be placed in the IDS, then remove it and all of its out neighbors. This returns a graph with no sources. 

\begin{claim}
The vertices chosen by the source-greedy algorithm are independent.
\end{claim}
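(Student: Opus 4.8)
The plan is to show that whenever the source-greedy algorithm (SGA) adds a vertex to the set $S$, that vertex has no arc to or from any vertex already in $S$. The crucial structural fact is that at the moment a vertex $v$ is chosen, it is a source \emph{in the current (reduced) graph}, meaning it has in-degree $0$ among the vertices that remain. I will track two things: that a chosen vertex cannot be the head of an arc from an earlier chosen vertex (because its out-neighbors are deleted), and that it cannot be the tail of an arc into an earlier chosen vertex (because it is a source in the remaining graph).

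First I would set up notation: let $v_1, v_2, \ldots, v_k$ be the vertices chosen by SGA in order, and let $D_i$ denote the digraph remaining just before $v_i$ is selected, so $D_1 = D$ and $D_{i+1}$ is obtained from $D_i$ by deleting $v_i$ together with all of its out-neighbors in $D_i$. The key observation is that each $v_i$ is a source in $D_i$, i.e. $d^-_{D_i}(v_i) = 0$. I would then argue independence by considering an arbitrary pair $v_i, v_j$ with $i < j$ and showing neither $(v_i, v_j)$ nor $(v_j, v_i)$ is an arc of $D$.

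For the arc $(v_i, v_j)$: when $v_i$ is chosen, all of its out-neighbors in $D_i$ are removed from the graph. Since $v_j$ survives to be chosen later (it lies in $D_j \subseteq D_i$), $v_j$ is \emph{not} an out-neighbor of $v_i$, so $(v_i, v_j) \notin A(D)$. For the arc $(v_j, v_i)$: here I would use that $v_j$ is a source in $D_j$. Since $i < j$, the vertex $v_i$ was already removed from the graph before $D_j$ was formed, so $v_i \notin V(D_j)$. This is the delicate point: an arc $(v_j, v_i)$ of the original digraph $D$ need not be present in $D_j$ once $v_i$ is deleted, so being a source in $D_j$ does not directly forbid $(v_j, v_i)$ in $D$. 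To handle this cleanly, I expect the main obstacle to be recognizing that the relevant deletion happened at an even earlier stage: $v_i$ is removed precisely when it is \emph{chosen} at step $i$ (as the source $v_i$ itself), and the only vertices deleted alongside a chosen source are that source's out-neighbors. Hence if $(v_j, v_i)$ were an arc, then at step $i$ the vertex $v_i$ would have in-degree at least $1$ in $D_i$ (coming from $v_j$, which is still present since $j > i$), contradicting that $v_i$ is a source in $D_i$. Thus $(v_j, v_i) \notin A(D)$ as well.

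Combining both cases shows no arc of $D$ joins two chosen vertices in either direction, so $S = \{v_1, \ldots, v_k\}$ is independent. The heart of the argument is the source condition $d^-_{D_i}(v_i) = 0$ together with the bookkeeping that $v_j \in V(D_i)$ for every $j \geq i$; once these are stated precisely, both directions of the arc check follow immediately, and I anticipate the write-up to be short.
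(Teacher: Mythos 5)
Your proof is correct and follows essentially the same approach as the paper's: the forward arc $(v_i,v_j)$ is excluded because $v_j$ survives the deletion of $v_i$'s out-neighbors, and the backward arc $(v_j,v_i)$ is excluded because $v_j$ is still present at step $i$, so such an arc would contradict $v_i$ being a source. Your version is just a more carefully bookkept write-up of the same two observations.
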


\begin{proof}
We consider the step at which the vertex $v$ is chosen by the SGA.
Since $v$ is chosen, it remains in the graph, so there are no edges from previously chosen sources to $v$.
Also $v$ cannot have had an edge to any previously chosen vertex, else it would not have been a source.
\end{proof}

Note that with the source-greedy algorithm guaranteeing an independent set, we can now refine our search to source-free graphs.

\begin{claim}
All oriented bipartite graphs have an IDS.
\end{claim}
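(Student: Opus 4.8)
The plan is to use the source-greedy algorithm to strip away all sources and then exploit the bipartition of the residual graph. Fix a bipartition $V(D) = X \cup Y$ of the oriented bipartite graph $D$, so that every arc has one endpoint in $X$ and one in $Y$; in particular $X$ and $Y$ are each independent, since $D$ has no arcs inside a part. First I would run the SGA, letting $I$ be the set of sources it selects (independent, by the first Claim) and letting $D'$ be the source-free digraph that remains. Since deleting vertices preserves bipartiteness, $D'$ is again an oriented bipartite graph, with parts $X' = X \cap V(D')$ and $Y' = Y \cap V(D')$.

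The heart of the argument is that one side of a source-free oriented bipartite graph is already an independent dominating set. Because $D'$ has no sources, every vertex $v$ of $D'$ has $d^-(v) \geq 1$ computed in $D'$. For $v \in Y'$, all in-neighbors of $v$ lie in $X'$ by bipartiteness, so $v$ has an in-neighbor in $X'$; hence $X'$ dominates $Y'$. As $X'$ is independent and its own vertices need no domination, $X'$ is an independent dominating set of $D'$ (if $D'$ is empty this is vacuous and $I$ already suffices). I would then propose $S = I \cup X'$ as the independent dominating set of $D$.

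It remains to check that $S$ dominates $D$ and is independent, and the independence is where the real care is needed. Every vertex deleted by the SGA is either a chosen source (hence in $I \subseteq S$) or an out-neighbor of a chosen source (hence dominated by a vertex of $I$); combined with the fact that $X'$ dominates $Y'$, this shows $S$ dominates every vertex of $V(D) \setminus S$. For independence, arcs inside $I$ are ruled out by the first Claim and arcs inside $X'$ by bipartiteness, so the main obstacle is excluding arcs between $I$ and $X'$. Here I would use that a vertex $x \in X'$ survives the entire algorithm: if $(s,x)$ were an arc for some $s \in I$, then $x$ would have been deleted as an out-neighbor at the moment $s$ was chosen, and if $(x,s)$ were an arc then $s$ would have had $d^-(s) \geq 1$ when it was selected and so could not have been a source. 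Both are contradictions, so no arc joins $I$ and $X'$, and $S$ is independent, completing the proof. The one subtlety to keep straight throughout is that the in-degree condition must be read in the residual graph $D'$, while the domination and independence of $S$ must ultimately be verified against the arcs of the original graph $D$.
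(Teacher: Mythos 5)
Your proposal is correct and follows essentially the same route as the paper: run the SGA, observe that the residual graph is source-free, and take one side of the bipartition to dominate the other. Your write-up is somewhat more careful than the paper's, in particular in explicitly ruling out arcs between the SGA-selected sources and the surviving side $X'$, a verification the paper leaves implicit.
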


\begin{proof}
First we run SGA.
What remains after SGA is a source-free graph. 
Now we may simply take one side of the bipartition in the independent dominating set.
Note that since there are no sources and the graph is now isolate free, each vertex has at least one in neighbor on the other side, so by taking an entire side, either the vertex is chosen or its in neighbor is chosen.
\end{proof}

\begin{observation}
A graph is IDS-free if and only if every execution of the SGA leaves a source-free graph with no IDS. In particular, every vertex minimal IDS-free graph is source-free.
\end{observation}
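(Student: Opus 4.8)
The plan is to reduce the entire statement to a single local fact about sources and then iterate it. The key observation I would isolate first is that every source must belong to every dominating set, and hence to every independent dominating set: if $v$ is a source then $d^-(v) = 0$, so no arc points into $v$, and the only way a set $S$ can dominate $v$ is to contain $v$ itself. This is the mechanism that makes the choices of the SGA forced rather than genuinely arbitrary, and it is what ties an IDS of $D$ to an IDS of the reduced graph.

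With that in hand, I would prove a one-step reduction lemma: if $v$ is a source of $D$ and $D' = D - (\{v\} \cup N^+(v))$ is obtained by deleting $v$ together with its out-neighborhood $N^+(v)$, then $D$ has an IDS if and only if $D'$ does. For the forward direction, given an IDS $S$ of $D$, the first observation forces $v \in S$, and independence forces $N^+(v) \cap S = \emptyset$; I would then check that $S' = S \setminus \{v\}$ is an IDS of the induced subgraph $D'$. Independence is immediate since $D'$ is induced. Domination is the delicate point: for $w \in V(D') \setminus S'$ I must produce an in-neighbor of $w$ lying in $S'$, and here I use that $w \notin N^+(v)$ (so $v$ is not that in-neighbor) together with $N^+(v) \cap S = \emptyset$ (so no deleted vertex is that in-neighbor), which keeps the dominating in-neighbor inside $D'$. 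For the reverse direction, given an IDS $S'$ of $D'$, I would set $S = S' \cup \{v\}$; independence holds because $v$ has no in-neighbors and all of its out-neighbors were deleted, and domination holds because $v$ dominates all of $N^+(v)$ while $S'$ already dominates $V(D') \setminus S'$. This lemma holds even when $N^+(v) = \emptyset$, i.e. for isolated sources.

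I would then obtain the biconditional by chaining. An execution of the SGA picks sources $v_1, \dots, v_k$ producing a chain $D = D_0, D_1, \dots, D_k = D^\ast$ with $D^\ast$ source-free, where each $v_i$ is a source of $D_{i-1}$ and $D_i$ arises from $D_{i-1}$ by the deletion above. Applying the lemma at each step shows that $D_{i-1}$ has an IDS if and only if $D_i$ does, so by induction $D$ has an IDS if and only if $D^\ast$ does. Since this equivalence holds for every sequence of source choices, $D$ is IDS-free if and only if the source-free graph left by any (equivalently, every) execution is IDS-free, which is the claimed equivalence.

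Finally, the ``in particular'' clause would follow by contradiction from the lemma. Suppose $D$ is a vertex-minimal IDS-free graph that nonetheless contains a source $v$. Then $D' = D - (\{v\} \cup N^+(v))$ is a proper induced subgraph, since it omits at least $v$, and by the lemma $D'$ is again IDS-free. This contradicts the minimality of $D$, so a vertex-minimal IDS-free graph can have no source. The only real obstacle in the whole argument is the careful domination bookkeeping in the forward direction of the lemma; once that local reduction is established, everything else is a routine induction.
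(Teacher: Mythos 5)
Your proposal is correct and follows essentially the same route as the paper: the paper's (much terser) proof also rests on the observation that a source can only be dominated by itself, so it is forced into any IDS and the problem reduces to the subgraph left after deleting the source and its out-neighborhood, iterated over the execution of the SGA. Your explicit one-step reduction lemma and the induction over the deletion chain simply make rigorous the paper's "reducing the problem to a subgraph; repeat."
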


\begin{proof}
By contraposition, if there exists an execution which leaves a source-free graph with an IDS, we run that execution and add the remaining IDS.

Suppose the graph has an IDS. Notice that each source must be taken in the independent dominating set, reducing the problem to a subgraph. Repeat.
\end{proof}

A digraph is said to be \emph{acyclic} if it does not contain any subgraphs isomorphic to a directed cycle.

\begin{theorem}\label{thm:DAG}
Every Directed acyclic graph contains an independent dominating set.
\end{theorem}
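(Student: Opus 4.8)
The plan is to run the source-greedy algorithm on the DAG and show that, unlike the general case, it never terminates with a nonempty source-free residual graph: instead it deletes every vertex, and the set of chosen sources is then the desired independent dominating set. Two of the ingredients are already in hand: by the first Claim the vertices selected by the SGA are independent, and by the Observation a graph is IDS-free if and only if every execution of the SGA leaves a source-free graph with no IDS. So it would in fact suffice to argue that on a DAG the SGA leaves behind the empty graph, which trivially has an IDS (the empty set); but I would also exhibit the dominating set directly, since that is barely more work and makes the argument self-contained.

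The key structural input is that every nonempty finite DAG has a source. I would prove this by choosing a longest directed path $v_0, v_1, \ldots, v_k$ and observing that its initial vertex $v_0$ must have in-degree zero: any in-neighbor $u$ of $v_0$ is distinct from every $v_i$ (otherwise an arc into $v_0$ together with the path would close a directed cycle, contradicting acyclicity), so prepending $u$ would yield a strictly longer path. Hence $v_0$ is a source. Combined with the elementary fact that deleting vertices from an acyclic digraph leaves an acyclic digraph, this shows that at every stage of the SGA the current graph, being an induced subdigraph of the original DAG, is itself a DAG, and therefore still possesses a source whenever it is nonempty.

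Consequently the SGA can only halt once $V(D)$ is exhausted. Let $S$ be the collection of sources selected during the run. Independence of $S$ is exactly the first Claim. For domination, I would note that each vertex is removed at some step: either it is chosen into $S$, or it is deleted as an out-neighbor of a chosen source $v$, in which case the arc $(v,w) \in A(D)$ witnesses that $w$ is dominated by $v \in S$. Thus $S$ is both independent and dominating, which proves the theorem.

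The step I expect to carry the real content is the claim that the SGA exhausts all of $V(D)$, i.e. that a DAG can never present a nonempty source-free residual graph; everything else is bookkeeping on top of the already-proven Claim and Observation. This reduces entirely to the existence-of-a-source lemma above, so the only genuine subtlety is applying that lemma at \emph{every} iteration rather than merely at the outset, which is precisely where the ``induced subdigraph of a DAG is a DAG'' remark is needed.
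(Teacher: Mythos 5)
Your proposal is correct and follows essentially the same route as the paper: run the source-greedy algorithm, use the fact that every nonempty DAG has a source and that deleting vertices preserves acyclicity to conclude the algorithm exhausts the vertex set, and then observe that the chosen sources form an independent dominating set. The only addition is your explicit longest-path proof that a nonempty finite DAG has a source, which the paper takes as known.
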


\begin{proof}
Consider a topological ordering of the vertices. 
The source greedy algorithm will provide an independent dominating set, since at each stage that a vertex set is removed, no cycles are created and we have reduced the problem to another directed acyclic graph.
Since every directed acyclic graph contains a source, this process will only terminate when no vertices remain, namely every vertex was either chosen, or was deleted by a chosen vertex which dominates it.
\end{proof}

\begin{corollary}
Every oriented tree contains an independent dominating set.
\end{corollary}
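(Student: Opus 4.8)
The plan is to reduce this statement directly to Theorem~\ref{thm:DAG} by observing that every oriented tree is in fact a directed acyclic graph. Once that reduction is in place there is nothing further to do, since Theorem~\ref{thm:DAG} already guarantees an independent dominating set in any directed acyclic graph.

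First I would recall that an oriented tree is an orientation of an undirected tree, and that an undirected tree contains no cycles whatsoever. The key step is to argue that orienting the edges cannot create a directed cycle. I would proceed by contradiction: suppose an oriented tree $T$ contained a directed cycle $v_0 v_1 \cdots v_{k-1} v_0$. By the definition of a directed cycle the vertices $v_0, \ldots, v_{k-1}$ are distinct, so forgetting the orientations of its arcs yields a genuine cycle in the underlying undirected graph. This contradicts the fact that the underlying graph is a tree, and hence acyclic. Having established that $T$ has no directed cycle, $T$ is by definition acyclic, and I would then simply invoke Theorem~\ref{thm:DAG} to conclude that $T$ contains an independent dominating set.

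I do not expect any genuine obstacle here; the entire content is the observation that an orientation of an acyclic underlying graph remains acyclic, after which the result is inherited from the more general theorem. The only point requiring minor care is confirming that a directed cycle really does project to an undirected cycle, that is, that no vertex is repeated, which is immediate from the definition of a cycle.
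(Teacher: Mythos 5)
Your proposal is correct and matches the paper's (implicit) argument exactly: the corollary is stated immediately after Theorem~\ref{thm:DAG} with no separate proof, precisely because an oriented tree has no directed cycles and is therefore a directed acyclic graph. Your extra care in checking that a directed cycle projects to an undirected cycle is a reasonable bit of diligence but does not change the route.
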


We now build to the main theorem, expanding the source-greedy algorithm to strongly connected components of a graph. 
Call a graph $G$ \emph{vertex minimal IDS-free} if $D$ has no IDS and for every subset $S \subseteq V(D)$, $D\setminus N^+[S]$ has an IDS $I$ such that $I \cap (N^-(S)) = \emptyset$. 
We define vertex minimal in this way as a generalization of the source-greedy algorithm, where $S$ is acting as a source which can be removed. 

\begin{theorem}
Any vertex minimal IDS-free digraph is strongly connected.
\end{theorem}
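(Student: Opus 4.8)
The plan is to argue by contradiction: I assume $D$ is vertex minimal IDS-free but \emph{not} strongly connected, and I construct an independent dominating set of $D$, contradicting the fact that $D$ is IDS-free. The object that drives the whole argument is the condensation of $D$ into its strongly connected components. Since $D$ is not strongly connected, this condensation is a directed acyclic graph on at least two vertices, so it has a \emph{source component}: a strongly connected component $C \subsetneq V(D)$ that receives no arcs from the rest of $D$. The reason for choosing a source component is that the vertices of $C$ can only be dominated from within $C$, which localizes the obstruction to domination inside $C$.

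First I would record two structural facts about $C$. By the earlier Observation, every vertex minimal IDS-free graph is source-free, so every vertex of $D$, in particular of $C$, has in-degree at least one; combined with the fact that $C$ receives no arcs from outside, this forces every in-neighbor of a vertex of $C$ to lie in $C$, that is $N^-(C) \subseteq C$. In particular $C$ has at least two vertices and contains a cycle, so it genuinely could be IDS-free on its own.

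Then I would split into two cases according to whether the induced subdigraph $D[C]$ has an independent dominating set. If $D[C]$ is itself IDS-free, then it is an IDS-free digraph on strictly fewer vertices than $D$, which contradicts the minimality of $D$. Otherwise $D[C]$ has an IDS $J$. Because $J$ dominates $C$ inside $D[C]$, every vertex of $C$ is either in $J$ or has an in-neighbor in $J$, so $C \subseteq N^+[J]$. Now I apply the minimality hypothesis to the generalized source $S = J$, obtaining an independent dominating set $I$ of the reduced digraph $D \setminus N^+[J]$. The key point is that the required avoidance $I \cap N^-(J) = \emptyset$ comes for free: every in-neighbor of $J$ lies in $C \subseteq N^+[J]$, whereas $I$ is by construction disjoint from $N^+[J]$.

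Finally I would verify that $J \cup I$ is an independent dominating set of $D$, the desired contradiction. Independence holds because $J$ and $I$ are each independent, no arc can run from $J$ into $I$ (its head would lie in $N^+(J)$, which is deleted before $I$ is chosen), and no arc can run from $I$ into $J$ (its tail would lie in $N^-(J) \subseteq C \subseteq N^+[J]$, again deleted). Domination holds because $J$ dominates all of $N^+[J] \supseteq C$ while $I$ dominates everything in $D \setminus N^+[J]$. I expect the main obstacle to be precisely this last bookkeeping: confirming that passing to a \emph{source} component really does confine all in-neighbors of $J$ to the deleted set $N^+[J]$, since that containment is exactly what converts the abstract minimality hypothesis into a usable, independence-preserving reduction and makes the two cases close.
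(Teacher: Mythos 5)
Your proof is correct and follows essentially the same route as the paper: pass to the condensation, take a source component $C$ (which can only be dominated from within itself), and either contradict minimality when $D[C]$ is IDS-free or use an IDS $J$ of $D[C]$ as the set $S$ in the minimality hypothesis to glue $J$ to an IDS of $D \setminus N^+[J]$. The only cosmetic difference is in the first case: under the paper's formal definition of vertex minimality the contradiction is obtained by applying the hypothesis to $S = V(D)\setminus C$ (whose closed out-neighborhood is exactly $V(D)\setminus C$ since $C$ is a source component, forcing $D[C]$ to have an IDS), rather than by appealing to ``strictly fewer vertices,'' but the conclusion is identical.
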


\begin{proof}
Let $D$ be a vertex minimal IDS-free digraph. 
Consider the strongly connected components of $G$. 
The reduced graph generated by contracting the strongly connected components is acyclic, hence there exists a source vertex. 
The strongly connected component corresponding to this source vertex, $C$, can be dominated only by other vertices in $C$. 
If $C$ has an IDS, then $G-C\cup(N^+[C])$ has no IDS, else $G$ has an independent dominating set. Otherwise, $C$ has no IDS, a contradiction with minimality of $D$ unless $D=C$. 
\end{proof}

\begin{claim}
Every vertex in a strongly connected digraph has at least one in degree and at least one out degree.
\end{claim}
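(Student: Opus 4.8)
The plan is to work directly from the path characterization of strong connectivity: in a strongly connected digraph on at least two vertices, for every ordered pair of distinct vertices $(x,y)$ there is a directed path from $x$ to $y$. Fix an arbitrary vertex $v$, and choose any other vertex $u$ (which exists as soon as $|V(D)| \geq 2$). I would then extract the two degree bounds from the two directed paths connecting $v$ and $u$.

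First I would invoke strong connectivity to obtain a directed path $P$ from $v$ to $u$. Since $v \neq u$, this path is nontrivial, so its initial arc is an arc leaving $v$; this single arc already witnesses $d^+(v) \geq 1$. Symmetrically, I would take a directed path $Q$ from $u$ to $v$, whose terminal arc enters $v$, witnessing $d^-(v) \geq 1$. Because $v$ was chosen arbitrarily, every vertex of $D$ has positive in-degree and positive out-degree, which is exactly the statement of the claim.

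The only genuine subtlety — and hence the main obstacle — is the degenerate single-vertex digraph, which is vacuously strongly connected yet has no arcs and so violates the conclusion. I would dispose of this by recording the standing convention that the strongly connected digraphs under consideration have at least two vertices. This is the harmless reading in context: in the preceding argument, strongly connected components are examined only as source vertices of the acyclic condensation, and a one-vertex component there is an actual source already handled by the source-greedy algorithm. With that convention in place, the argument above is complete, and no further case analysis is required.
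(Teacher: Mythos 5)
Your proof is correct and is essentially the same argument as the paper's, which simply states the contrapositive: if $d^+(v)=0$ there is no path from $v$ to any other vertex, and if $d^-(v)=0$ there is no path to $v$. Your explicit handling of the one-vertex degenerate case is a minor refinement the paper omits, but otherwise the two arguments coincide.
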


\begin{proof}
Clear, since if a vertex has $d^+(v) = 0$ there does not exist a path from $v$ to any other vertex, and if $d^-(v)$ there does not exist a path from any other vertex to $v$.
\end{proof}

Since odd cycles are a problem for independent domination, we explore the digraphs with specific periods. 
We define the \emph{period} of a digraph $D$ to be the greatest common divisor among all lengths of directed cycles which appear as subgraphs in $D$. 
As convention, we will say that the period of a directed acyclic graph is $0$. 

We now introduce some tools of linear algebra, which will come in handy for the next proof. 
For a directed graph $D$ on $n$ vertices, we define the \emph{adjacency matrix} of $D$, $A_D$ (or just $A$ if context is clear) to be the $n \times n$ matrix with $(i,j)$ entry $1$ if $(i,j) \in A(D)$ and $0$ otherwise. 
We say that a square matrix is \emph{irreducible} if it is not similar via a permutation matrix to a block upper triangluar matrix. 
The following well known theorem is a fundamental result of spectral graph theory, relating linear algebra and directed graphs:

\begin{theorem}[Godsil and Royle \cite{GodsilRoyle}]
A directed graph $G$ is strongly connected if and only if $A_G$ is irreducible.
\end{theorem}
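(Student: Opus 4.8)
The plan is to prove both implications via their contrapositives, after first translating the linear-algebraic notion of reducibility into a purely combinatorial condition on the arc set. The bridge is the observation that conjugating $A_G$ by a permutation matrix $P$ is exactly a relabelling of the vertices of $G$, so $P^{T} A_G P$ is the adjacency matrix of an isomorphic copy of $G$. Writing such a conjugate in block upper triangular form
\[
P^{T} A_G P = \begin{pmatrix} B & C \\ 0 & D \end{pmatrix},
\]
with $B$ of size $k \times k$ and $D$ of size $(n-k)\times(n-k)$ for some $1 \le k \le n-1$, amounts to choosing an ordered partition $V(G) = T \sqcup U$ with $|T| = k$ and $|U| = n-k$ for which the lower-left block vanishes. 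Since the $(i,j)$ entry records an arc from $i$ to $j$, the vanishing of that block says precisely that there is no arc from any vertex of $U$ to any vertex of $T$. Hence $A_G$ is reducible if and only if $V(G)$ admits a partition into two nonempty parts $T, U$ with no arc from $U$ to $T$; equivalently, there is a proper nonempty set $U$ that is closed under taking out-neighbours.

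With this dictionary in hand, I would first prove the contrapositive of the forward direction. Suppose $A_G$ is reducible and fix a set $U$ that is out-closed, proper, and nonempty. Pick $u \in U$ and $t \in V(G) \setminus U$. Any directed path starting at $u$ stays inside $U$ at every step, since each arc out of a vertex of $U$ has its head again in $U$; consequently no directed path from $u$ can reach $t$, and $G$ is not strongly connected.

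For the reverse direction I would again argue contrapositively. Assume $G$ is not strongly connected, so there are vertices $u, t$ with no directed path from $u$ to $t$. Let $U$ be the set of all vertices reachable from $u$ by a directed path, including $u$ itself. Then $u \in U$ gives $U \neq \emptyset$, and $t \notin U$ gives $U \neq V(G)$; moreover $U$ is out-closed, for if $w \in U$ and $(w,x) \in A(G)$ then $x$ is reachable from $u$, whence $x \in U$. By the dictionary this exhibits $A_G$ as reducible, completing the proof.

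The graph-theoretic content of each step is elementary; the part demanding the most care is the translation step, namely matching conventions so that the vanishing lower-left block corresponds to the absence of arcs from $U$ to $T$ under the paper's convention that the $(i,j)$ entry equals $1$ exactly when there is an arc from $i$ to $j$. One must track which index set labels rows versus columns and keep in mind that it is conjugation by $P$, rather than one-sided multiplication, that realises a relabelling; getting the direction of the forbidden arcs backwards would still yield an \emph{if and only if} statement, but it would prove strong connectivity of the reverse graph instead. I expect this bookkeeping, rather than any combinatorial difficulty, to be the main obstacle.
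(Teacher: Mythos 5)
Your proof is correct. The paper itself offers no proof of this statement --- it is quoted from Godsil and Royle as a known result --- so there is nothing to compare against; your argument is the standard one. The dictionary you set up (reducibility of $A_G$ $\Leftrightarrow$ existence of a nonempty proper vertex set $U$ with no arc leaving $U$, under the paper's convention that entry $(i,j)=1$ means an arc from $i$ to $j$) is stated with the correct orientation, and both contrapositive directions go through: an out-closed proper nonempty $U$ obstructs reachability, and conversely the reachability set of a vertex that fails to reach some other vertex is exactly such a $U$. The only caveat worth recording is the degenerate case $n=1$, where the claim holds vacuously under the paper's definition of irreducibility (no admissible block split with $1\le k\le n-1$ exists), though some texts adopt a different convention for the $1\times 1$ zero matrix.
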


Perron-Frobenius theory provides a deeper relationship between graph and digraph properties and their respective adjacency matrices. For more information about this relationship, we direct the reader to the textbook \cite{GodsilRoyle}. In particular, the period of a strongly connected digraph creates rich structure in the adjacency matrix, as evidenced by the following theorem.

\begin{theorem}[Frobenius \cite{Frobenius}]
If $G$ is a digraph with period $h>1$, there exists some permutation matrix $P$ such that $PAP^{-1}$ is a block matrix \[PAP^{-1} = \left(
\begin{array}{cccccc}
0      &A_1     & 0      &        &     &0 \\
       &        & A_2    &\ddots  &     & \\
\vdots &        &\ddots  &    \ddots     &     &0  \\
0      &        &        &        &     &A_{h-1} \\
A_h    &0       &        &\cdots  &     &0 \\

\end{array}\right)\]

where each diagonal block is square zero matrix. 
\end{theorem}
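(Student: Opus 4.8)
The plan is to produce the desired block form by exhibiting an explicit partition of $V(G)$ into $h$ nonempty classes $V_0, V_1, \dots, V_{h-1}$ with the property that every arc of $G$ goes from some $V_i$ to $V_{i+1}$ (subscripts read modulo $h$). Ordering the vertices class-by-class then yields the permutation matrix $P$ and the claimed cyclic block structure, with the zero diagonal blocks coming from the absence of arcs inside a single class and the corner block $A_h$ coming from the arcs out of $V_{h-1}$ into $V_0$.

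First I would fix a base vertex $v_0$ and, for each vertex $u$, define $c(u)$ to be the length modulo $h$ of some directed walk from $v_0$ to $u$; such a walk exists because $G$ is strongly connected. The crucial point is that $c(u)$ is well defined, independent of the chosen walk. To see this, given two walks from $v_0$ to $u$ of lengths $\ell_1$ and $\ell_2$, I would append a fixed walk $Q$ from $u$ back to $v_0$ (again using strong connectivity) to form two closed walks of lengths $\ell_1 + |Q|$ and $\ell_2 + |Q|$.

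The key lemma, which I expect to be the main obstacle, is that every closed walk in $G$ has length divisible by $h$. This follows from the definition of period together with the standard fact that any closed walk decomposes into a union of directed cycles: since $h$ is the greatest common divisor of all cycle lengths, $h$ divides the length of each cycle, and hence divides the total length of the closed walk. Applying this to the two closed walks above gives $\ell_1 + |Q| \equiv \ell_2 + |Q| \equiv 0 \pmod h$, whence $\ell_1 \equiv \ell_2 \pmod h$, establishing well-definedness.

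With the class function in hand, the rest is routine. For any arc $(x,y)$, prepending a walk from $v_0$ to $x$ shows $c(y) \equiv c(x) + 1 \pmod h$, so every arc increases the class by one; in particular no arc stays within a class, forcing the diagonal blocks to be zero. To see that all $h$ classes are nonempty, I would start at $v_0 \in V_0$ and repeatedly pass to an out-neighbor, which exists at every step since every vertex of a strongly connected digraph has positive out-degree; this produces vertices of classes $1, 2, \dots, h-1$ in turn. Finally, ordering the vertices so that $V_0$ precedes $V_1$ precedes $\cdots$ precedes $V_{h-1}$ gives a permutation matrix $P$ for which $PAP^{-1}$ has nonzero blocks only in the positions $(i,i+1)$ for $0 \le i \le h-2$ and in the wrap-around position $(h-1,0)$, which is exactly the displayed form.
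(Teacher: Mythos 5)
The paper does not prove this statement at all: it is quoted as a classical theorem of Frobenius and used as a black box, so there is no internal proof to compare yours against. Your argument is the standard self-contained combinatorial proof of that classical result, and it is correct. The one step that carries all the weight --- that every closed walk has length divisible by $h$ --- you justify properly via the decomposition of a closed walk into directed cycles (splice out the segment between the first repeated vertex and its earlier occurrence, which is a simple cycle, and induct), and the rest (well-definedness of the class function $c$, the shift $c(y)\equiv c(x)+1$ across each arc, nonemptiness of the classes, and the reordering that produces $P$) is routine as you say. The only caveat worth recording is that you use strong connectivity in several places (existence of walks from $v_0$ to every vertex, of a return walk $Q$, and of an out-neighbor at every step), whereas the theorem as displayed says only ``digraph with period $h>1$''; the statement is in fact false without strong connectivity (one can glue two disjoint $2$-cycles by arcs that force a vertex into two different residue classes), but the surrounding text of the paper makes clear that the strongly connected case is intended, so your reading is the right one. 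Net effect: where the paper defers to a citation, you supply a complete elementary proof that avoids any Perron--Frobenius machinery.
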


We notice that this provides a way to partition our digraph $D$ into $h$ independent sets, which we will call $S_0, \dots S_{h-1}$ corresponding to the vertices of the diagonal zero blocks. With this structure theorem, we may now prove the main theorem of the paper.

\begin{theorem} \label{thm:eper}
Every graph with even period has an independent dominating set.
\end{theorem}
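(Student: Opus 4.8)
The plan is to reduce to the strongly connected case and then exploit the cyclic block structure from Frobenius's theorem via a parity argument. The observation I would record first is that a positive even period is equivalent to every directed cycle having even length: since the period $h$ divides every cycle length and $2 \mid h$, every directed cycle of $D$ has even length, and conversely if all cycles are even then $2$ is a common divisor, so the period is even. This characterization is robust: it is inherited by every induced subgraph of $D$ (a cycle in a subgraph is a cycle in $D$), and in particular by every strongly connected component.

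For the strongly connected case, suppose $D$ is strongly connected with even period $h$ (we may assume $h>1$, as the acyclic case $h=0$ is Theorem~\ref{thm:DAG}). I would apply Frobenius's theorem to obtain the partition $V(D) = S_0 \cup \dots \cup S_{h-1}$ into independent classes, where every arc runs from $S_i$ to $S_{(i+1) \bmod h}$. Set $I = S_0 \cup S_2 \cup \dots \cup S_{h-2}$, the union of the even-indexed classes. Independence of $I$ is immediate: any arc leaving an even class $S_{2k}$ lands in the odd class $S_{2k+1}$, so no arc has both endpoints in $I$. For domination, take any $v \notin I$; then $v$ lies in some odd class $S_{2k+1}$, and by the earlier claim every vertex of a strongly connected digraph has positive in-degree. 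All in-neighbours of $v$ lie in the preceding class $S_{2k}$, which is even and hence contained in $I$, so $v$ is dominated by a vertex of $I$. Thus $I$ is an independent dominating set.

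For a general digraph $D$ with even period, I would induct on $|V(D)|$. If $D$ is acyclic apply Theorem~\ref{thm:DAG}; if $D$ is strongly connected apply the paragraph above. Otherwise contract the strongly connected components to form the acyclic reduced digraph and choose a source component $C$. Since no arc enters $C$ from outside, $C$ must be dominated internally; as $C$ is either a single vertex or strongly connected with even period, it has an independent dominating set $I_C$ by the base cases. I then delete $N^+[I_C]$: because $I_C$ dominates $C$, this removes all of $C$, leaving an induced subgraph $D'$ with strictly fewer vertices that still has even period, so by induction $D'$ has an independent dominating set $I'$. The final step is to verify that $I_C \cup I'$ works for $D$.

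The main obstacle is this gluing step. I expect independence to follow because $I'$ avoids $N^+[I_C]$ (killing arcs from $I_C$ to $I'$) and because $C$ is a source component with no incoming arcs (killing arcs from $I'$ to $I_C$), while domination splits into three cases: vertices of $C$ (handled by $I_C$), vertices deleted into $N^+[I_C]$ (dominated by $I_C$), and surviving vertices of $D'$ (dominated by $I'$, whose arcs persist in $D$). The subtle points to get right are that deleting $N^+[I_C]$ genuinely removes all of $C$ and preserves even period, and that the orientation convention in Frobenius's block form really does place the in-neighbours of an odd class in an even class rather than the reverse.
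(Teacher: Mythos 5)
Your proof of the strongly connected case is exactly the paper's argument: take the Frobenius cyclic partition $S_0,\dots,S_{h-1}$, select the even-indexed classes, get independence because same-parity classes are never consecutive modulo an even $h$, and get domination because every vertex of a strongly connected digraph has an in-neighbour, necessarily in the preceding (even-indexed) class. Where you genuinely diverge is in treating the non-strongly-connected case at all. The paper's proof applies the block decomposition and the positive-in-degree claim directly to an arbitrary $D$ with even period, but both of those facts presuppose irreducibility: a digraph with even period that is not strongly connected need not admit the cyclic partition (its underlying undirected graph can contain an odd clique among arcs not lying on any cycle), and it may contain sources. The paper implicitly restricts to the strongly connected setting, which is also how it states the ensuing corollary on the idomatic number. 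Your induction on the condensation --- take a source component $C$, note it is a single vertex or strongly connected with even period, find an IDS $I_C$ of $C$, delete $N^+[I_C]$ (which contains all of $C$ since $I_C$ dominates $C$), and recurse on the smaller digraph whose cycles are still all even --- closes that gap, and your gluing argument (no arcs from $I'$ into the source component, no arcs from $I_C$ to $I'$ since $I'$ avoids $N^+[I_C]$) is sound; it is essentially the same mechanism the paper uses later to show that vertex-minimal IDS-free digraphs are strongly connected. So your write-up proves the theorem as literally stated, at the cost of the extra induction, while the paper's shorter proof really establishes only the strongly connected case.
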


\begin{proof}
If $D$ has period $h$, as above the graph can be partitioned into $h$ independent sets $S_0,\dots,S_{h-1}$ such that there exists an edge from $u$ to $v$ only if $u \in S_i$ and $v \in S_{i+1}$ for some $i \in [h]$ with addition modulo $h$. 
Therefore, we can create an independent dominating set by taking all $S_i$ such that $i$ is even (or odd).

To see that this set is indeed an IDS, since we take only independent sets of the same parity, and $h$ is even there are no parts which are taken that share any adjacencies. 
Furthermore, since every vertex has at least one in degree, we observe one vertex $v$. 
Either $v$ is included in the set, or it is in $S_i$ which is not included and has at least one in degree from $S_{i-1}$, say from $u$. 
But if $S_i$ is not included in our set, $S_{i-1}$ is included in our set, hence $u$ is in the set and dominates $v$.
\end{proof}

\begin{corollary}\label{cor:oper}
If $G$ is vertex minimal IDS-free, $G$ has odd period.
\end{corollary}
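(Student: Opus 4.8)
The plan is to argue by contraposition, showing that any graph whose period is not odd must contain an independent dominating set and therefore cannot even be IDS-free, much less vertex minimal IDS-free. Recall that the period of a digraph is the greatest common divisor of the lengths of its directed cycles, with the convention that an acyclic digraph has period $0$. Thus the possible periods split into three mutually exclusive cases: period $0$ (acyclic), even positive period, and odd period. My goal is to eliminate the first two cases for a vertex minimal IDS-free graph, leaving only odd period.

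First I would record the trivial but essential observation that a vertex minimal IDS-free graph is in particular IDS-free, so it admits no independent dominating set. This is the property I will contradict in the remaining cases.

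Next I would dispose of the acyclic case: if $G$ had period $0$, then by Theorem \ref{thm:DAG} it would contain an independent dominating set, contradicting that $G$ is IDS-free. Hence $G$ must contain a directed cycle, and its period $h$ satisfies $h \geq 1$. I would then invoke Theorem \ref{thm:eper}: if $h$ were even, $G$ would again contain an independent dominating set, a second contradiction. The only surviving possibility is that $h$ is odd, which completes the argument.

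Since the statement is essentially an immediate consequence of the two existence theorems already in hand, I do not expect any genuine obstacle. The single point deserving care is that the negation of ``odd period'' covers both the even-period and the acyclic (period $0$) situations, and only the even-period case is handled by Theorem \ref{thm:eper}; one must remember to retire the period-$0$ case separately via Theorem \ref{thm:DAG}, lest the argument appear to overlook acyclic digraphs.
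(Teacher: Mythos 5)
Your proposal is correct and is essentially the argument the paper intends: the corollary is stated without proof as an immediate consequence of Theorem \ref{thm:eper} (together with Theorem \ref{thm:DAG} for the acyclic case), exactly as you lay out. Your explicit care in separating the period-$0$ case from the even-period case is a reasonable touch of rigor, but it does not change the route.
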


\begin{corollary} \label{cor:bipartite}
Every oriented bipartite graph has an independent dominating set.
\end{corollary}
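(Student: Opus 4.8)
The plan is to deduce the corollary directly from Theorem~\ref{thm:eper}, by showing that every oriented bipartite graph has even period (or else is acyclic). The structural fact I would rely on is that in a bipartite graph every cycle of the underlying undirected graph has even length: if $X \cup Y$ is the bipartition, then any closed walk must alternate between $X$ and $Y$ and hence traverse an even number of edges. Since every directed cycle in $D$ is in particular a cycle of the underlying graph, I conclude that every directed cycle of an oriented bipartite graph has even length.

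With this in hand, I would split into two cases according to whether $D$ contains a directed cycle. If $D$ has no directed cycle, then $D$ is acyclic and Theorem~\ref{thm:DAG} immediately furnishes an independent dominating set. If $D$ does contain at least one directed cycle, then its period $h$ is the greatest common divisor of a nonempty set of even integers and is therefore even; Theorem~\ref{thm:eper} then provides an independent dominating set. In either case $D$ has an independent dominating set, which is exactly the claim.

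The main point to be careful about is the boundary between these two cases. It is forced by the convention that an acyclic digraph has period $0$ together with the fact that the Frobenius structure theorem underlying Theorem~\ref{thm:eper} is stated only for period $h>1$; for this reason the acyclic case cannot simply be absorbed into ``even period'' and must be dispatched separately through Theorem~\ref{thm:DAG}. I would also remark that this yields a second, cleaner derivation of the earlier claim that all oriented bipartite graphs have an \emph{IDS}: instead of invoking the source-greedy algorithm and then taking one side of the bipartition, the existence of an independent dominating set now follows purely from the parity of the period.
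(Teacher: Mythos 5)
Your derivation matches the paper's intended route: Corollary~\ref{cor:bipartite} is stated as an immediate consequence of Theorem~\ref{thm:eper}, resting on exactly the fact you cite, that every cycle of a bipartite graph is even, so the period is even unless the digraph is acyclic, in which case Theorem~\ref{thm:DAG} applies. Your explicit split into the acyclic and cyclic cases, and your remark that this gives a second derivation distinct from the earlier source-greedy proof of the same statement, correctly fill in details the paper leaves implicit.
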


Cary, Cary, and Prabhu\cite{CaryPrabhu} define the maximum number of vertex disjoint independent dominating sets in a digraph $G$ as the \emph{Idomatic Number}, written $id(G)$. 
We note that corollary \ref{cor:bipartite} was proven in this paper as the worked towards determining graphs with $id(G) = 1$.
Our proof provides a bound for the idomatic number of graphs with even period. 

\begin{corollary}
Every strongly connected digraph $D$ with even period has $idom(D) \geq 2$. 
\end{corollary}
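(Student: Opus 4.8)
The plan is to recycle the construction already carried out in the proof of Theorem~\ref{thm:eper}, observing that it secretly produces \emph{two} independent dominating sets rather than one. Recall that since $D$ is strongly connected with even period $h$, the Frobenius structure theorem partitions $V(D)$ into independent sets $S_0,\dots,S_{h-1}$ with all arcs running from $S_i$ to $S_{i+1}$ (indices mod $h$). The proof of Theorem~\ref{thm:eper} verified that $E := \bigcup_{i \text{ even}} S_i$ is an independent dominating set, and the parenthetical ``(or odd)'' in that proof indicates that the identical argument applies to $O := \bigcup_{i \text{ odd}} S_i$.

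First I would state explicitly that $O$ is also an IDS: since $h$ is even, the odd-indexed parts are pairwise non-adjacent (an arc out of $S_i$ lands in $S_{i+1}$, which has opposite parity), so $O$ is independent; and domination follows because every vertex of $D$ has in-degree at least $1$, so any vertex in an even-indexed $S_i$ receives an arc from the odd-indexed $S_{i-1} \subseteq O$. This is just the mirror image of the even case, so I would invoke it by symmetry rather than rewriting it.

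Next I would observe that $E$ and $O$ are vertex-disjoint. This is immediate: the sets $S_0,\dots,S_{h-1}$ form a partition of $V(D)$, and $E$ and $O$ collect disjoint index classes (even versus odd), so $E \cap O = \emptyset$. Because $h$ is even and at least $2$ (period greater than $1$, by definition strictly exceeding the acyclic case), both index classes are nonempty, hence $E$ and $O$ are two distinct, nonempty, vertex-disjoint independent dominating sets. By definition of the idomatic number as the maximum number of vertex-disjoint independent dominating sets, this yields $idom(D) \geq 2$.

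Honestly I expect no serious obstacle here, since the corollary is essentially a repackaging of Theorem~\ref{thm:eper}; the only point requiring a moment of care is making the disjointness and the ``odd works too'' symmetry fully explicit, so that the two constructed sets are genuinely distinct witnesses rather than the same set counted twice. If I wanted to emphasize robustness, I could remark that this argument in fact shows any two consecutive parity classes could be split further when $h > 2$, suggesting the stronger bound $idom(D) \geq 2$ is far from tight for large even period, but that sharper statement is not needed for the corollary as stated.
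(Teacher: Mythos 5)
Your argument is correct and takes the same route the paper intends: the corollary is stated without proof precisely because the proof of Theorem~\ref{thm:eper} already exhibits both the even-indexed union and the odd-indexed union as independent dominating sets, and these are disjoint and nonempty since the $S_i$ partition $V(D)$ and $h \geq 2$. Only your closing aside should be dropped: the argument does not show the parity classes can be ``split further,'' since a single part $S_i$ on its own need not dominate $S_{i+2}$.
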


\end{section}

\begin{section}{Vizing's Conjecture}
In this section, we show that the analogous statement to the famous Vizing's conjecture does not hold with independent dominating sets. 
Vizing's conjecture is about the relationship between domination number (the smallest size of a dominating set of a graph $G$, $\gamma(G)$) of graphs with their Cartesian product.

We define the Cartesian product of directed graphs with vertex set $V(G) \times V(H)$ with edges defined by :

\[
A(G \square H) = \{ (x,u)(y,v) | xy \in A(G) \text{ and } u = v \text{ or } uv \in A(H) \text{ and } x=y \}
\]

\begin{conjecture}[Vizing \cite{Vizing}]
For any undirected graphs $G$ and $H$, $\gamma(G\square H) \geq \gamma(G)\gamma(H).$
\end{conjecture}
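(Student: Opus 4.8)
The plan is to attack the inequality through the standard fiber decomposition of the Cartesian product combined with a double-counting argument in the spirit of Clark and Suen. Fix a minimum dominating set $D$ of $G \square H$, so that $|D| = \gamma(G \square H)$, and fix a minimum dominating set of $G$ of size $\gamma(G)$. Using the latter I would partition $V(G)$ into $\gamma(G)$ classes $\pi_1,\dots,\pi_{\gamma(G)}$, where each $\pi_i$ is the portion of $V(G)$ assigned to a single dominator (its closed-neighborhood share). This induces a partition of $G \square H$ into $\gamma(G)$ ``slabs'' $P_i = \pi_i \times V(H)$, and since the $P_i$ partition the vertex set we have $|D| = \sum_{i} |D \cap P_i|$. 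The goal then reduces to proving, slab by slab, a lower bound that aggregates to $\gamma(G)\gamma(H)$.

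The next step is to analyze how $D$ dominates each $H$-fiber $\{g\} \times V(H)$. The key structural fact is that a vertex $(x,u) \in D$ dominates $(y,v)$ only if either $x=y$ and $uv \in A(H)$ (``vertical'' domination inside one fiber) or $u=v$ and $xy \in A(G)$ (``horizontal'' domination across fibers at a fixed second coordinate). For a fixed $g$, the projection of $D \cap (\{g\}\times V(H))$ onto the $H$-coordinate need not dominate $H$ on its own, but it may be completed by horizontal help from dominators in neighboring fibers. I would classify the fibers within each slab according to whether their domination is achieved internally or requires such horizontal assistance, and argue that the fibers relying only on internal (vertical) domination each force at least $\gamma(H)$ vertices of $D$ into that slab, while bounding the number of fibers that can be ``helped for free.''

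The hard part will be controlling the two directions simultaneously: a single vertex of $D$ can serve vertical domination in its own fiber while also providing horizontal domination in several other fibers at once, and it is exactly this double duty that obstructs a clean $\gamma(G)\gamma(H)$ bound. The Clark--Suen refinement of precisely this argument yields only $\gamma(G \square H) \geq \tfrac{1}{2}\gamma(G)\gamma(H)$, and closing the remaining factor of two is the genuine obstacle; indeed Vizing's conjecture remains open in general. I would therefore expect the realistic outcome of this plan to be a reproof of the Clark--Suen bound, or a verification of the conjecture in restricted cases (for example when one factor has domination number at most two, is a tree, or satisfies a packing-type equality such as $\gamma = $ the $2$-packing number), rather than a full resolution of the conjecture as stated.
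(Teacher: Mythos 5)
This statement is Vizing's Conjecture itself, which the paper quotes from \cite{Vizing} purely as background: it is stated without proof because it is a famous open problem, and the paper offers no argument for it. So there is no ``paper proof'' to compare against, and your proposal should be judged on whether it actually establishes the inequality --- which, by your own admission, it does not. Your sketch correctly identifies the standard machinery: partition $V(G)$ into $\gamma(G)$ classes, one per dominator of a minimum dominating set of $G$, slice $G \square H$ into the corresponding slabs $\pi_i \times V(H)$, and double-count how the fibers $\{g\} \times V(H)$ are dominated vertically (within a fiber) versus horizontally (from adjacent fibers). You also correctly locate the obstruction: a single vertex of $D$ can do vertical work in its own fiber and horizontal work in several others simultaneously, and the Clark--Suen accounting, which charges for both roles, loses exactly a factor of two, yielding only $\gamma(G \square H) \geq \frac{1}{2}\gamma(G)\gamma(H)$. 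No known refinement of this counting closes that factor, so the plan cannot terminate in a proof of the statement as written; the realistic outcomes you list (the Clark--Suen bound, or the known special cases such as trees, $\gamma(G) \leq 2$, or graphs where the $2$-packing number equals $\gamma$) are accurate descriptions of the state of the art, not of this statement.

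In short: there is no gap in your understanding, but there is a gap between your plan and the claim --- the claim is open, and your proposal honestly proves strictly less than what is asserted. For the purposes of this paper that is the correct posture, since the conjecture is invoked only to motivate the directed analogue (which the paper then refutes for independent domination via Claims \ref{thm:destroyIDS} and \ref{thm:createids}); nothing downstream in the paper depends on the truth of Vizing's Conjecture itself. One small notational slip: $G$ and $H$ here are undirected, so the adjacency conditions should read $uv \in E(H)$ and $xy \in E(G)$ rather than the arc-set notation $A(\cdot)$ you borrowed from the paper's directed setting.
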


This also has an analogous conjecture in independent domination, asked by Goddard and Henning, which would imply Vizing's conjecture. 
For the \emph{independent domination number}, the smallest size of a dominating set of a graph $G$, denoted $i(G)$.
Vizing's Conjecture is altered in the case of independent domination since it has been proven that there exist graphs $G,H$ such that $i(G\square H) < i(G)i(H)$ \cite{BDGHHK}. 

\begin{conjecture}[\cite{BDGHHK}]
For any undirected graphs $G$ and $H$, \[i(G \square H) \geq \min \{i(G)\gamma(H), \gamma(G) i(H)\}.\]
\end{conjecture}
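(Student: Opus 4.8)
The plan is to adapt the partition-and-projection method of Clark and Suen---the standard engine behind Vizing-type inequalities---to the independent domination setting. Let $I$ be a minimum independent dominating set of $G \square H$, so that $|I| = i(G \square H)$, and I would exploit the product's fiber structure. For $u \in V(H)$ write the \emph{row} $R_u = V(G) \times \{u\}$ and for $x \in V(G)$ the \emph{column} $C_x = \{x\} \times V(H)$; each row induces a copy of $G$ and each column a copy of $H$. The first routine observations are that $I \cap R_u$, viewed as a subset $I_u \subseteq V(G)$, is independent in $G$, and likewise $I \cap C_x$ projects to an independent set $I^x \subseteq V(H)$. Moreover a vertex $(x,u) \notin I$ is dominated either \emph{horizontally} (by some $(y,u) \in I$ with $y \sim_G x$) or \emph{vertically} (by some $(x,v) \in I$ with $v \sim_H u$); this dichotomy is exactly what the minimum on the right-hand side is meant to track.

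Next I would set up the cell partition. Fix a minimum dominating set $\{w_1,\dots,w_{\gamma(H)}\}$ of $H$ and partition $V(H)$ into cells $P_1,\dots,P_{\gamma(H)}$ with each $w_j \in P_j$ and $P_j$ contained in the closed neighborhood of $w_j$. The aim is a dichotomy on rows: call a row $u$ \emph{self-dominated} if the projection $I_u$ already dominates $G$. If $u$ is self-dominated then $I_u$ is an independent dominating set of $G$, so $|I \cap R_u| = |I_u| \ge i(G)$; if at least one row per cell is self-dominated, then summing over the $\gamma(H)$ cells should yield $|I| \ge i(G)\,\gamma(H)$. The complementary branch is that within some cell every row fails to be self-dominated, forcing the undominated vertices of those rows to be covered vertically; here I would push the count into the columns and aim for the bound $\gamma(G)\,i(H)$ by arguing that the vertical domination, together with the independence of each $I^x$ in $H$, forces at least $i(H)$ entries in enough columns.

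The hard part---and the reason this remains a \emph{conjecture} rather than a theorem---will be making the two branches meet at the $\min$ without loss. In ordinary domination the Clark--Suen bookkeeping is lossy (it yields only $\tfrac{1}{2}\gamma(G)\gamma(H)$), and the independence constraint here cuts both ways: it strengthens self-dominated rows from $\gamma(G)$ up to $i(G)$, but it also forbids the free augmentation of a horizontally dominating set into an independent one, so a row that dominates $G$ vertically cannot simply be charged as an independent dominating set of $H$ in each of its columns. Controlling the vertices that are dominated partly horizontally and partly vertically---and ensuring the charging scheme never double-spends across a cell boundary---is precisely the obstruction. I would try to resolve it with a discharging argument that assigns each vertex of $I$ a weight split between its row-cell and its column, tuned so that every cell receives total weight at least $\min\{i(G),\,\gamma(G)\,i(H)/\gamma(H)\}$, whose sum over the $\gamma(H)$ cells is exactly $\min\{i(G)\gamma(H),\gamma(G)i(H)\}$; verifying that such weights can always be balanced is where I expect the argument to either close or break down.
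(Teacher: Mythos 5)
The statement you were asked to prove is a \emph{conjecture}, not a theorem: the paper attributes it to \cite{BDGHHK}, offers no proof, and uses it only as motivation for constructing directed counterexamples. It is an open problem in the literature, so there is no paper proof to compare against, and any complete argument here would be a substantial new result rather than a routine exercise.

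Read as a proof attempt, your proposal has a genuine gap, and to your credit you identify it yourself: everything hinges on the final discharging step (``where I expect the argument to either close or break down''), and that step is never defined, let alone verified. The Clark--Suen skeleton you set up is the standard one, but as you note its bookkeeping is intrinsically lossy --- for ordinary domination it yields only $\frac{1}{2}\gamma(G)\gamma(H)$, and no known refinement recovers the full $\min$-form bound once the independence constraint enters. Concretely, the second branch fails as stated: if every row within some cell fails to be self-dominated, the column projections $I^x$ are independent in $H$ but need not be dominating, and the vertical domination of the undominated row-vertices only forces members of $I$ in particular column positions, not $i(H)$ entries per column; so the charge $\gamma(G)\,i(H)$ cannot be extracted from the columns by the counting you sketch. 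Without an explicit weight-splitting scheme, with a proof that it never double-spends across cell boundaries and that every cell receives weight at least $\min\{i(G),\,\gamma(G)\,i(H)/\gamma(H)\}$, the sum over the $\gamma(H)$ cells is not justified. The obstruction you flag --- vertices dominated partly horizontally and partly vertically --- is precisely why this remains open; your proposal is an honest research plan, but it is not a proof of the statement.
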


We will show that the possibility of a directed graph containing no independent dominating set will provide examples that ensure that no such inequality holds in directed graphs.
One may wonder how to define the independent domination number for directed graphs without independent dominating sets.
Some natural candidates for $G$ IDS-free would be $i(G) = 0$, $i(G) = n+1$, or $i(G) = \infty$.
The following corollary shows that the direct translation of the conjecture of Goddard and Henning into directed graphs cannot hold regardless of which convention is chosen.
In the case that $i(G) = 0$ is chosen, Claim \ref{thm:destroyIDS} provides a family counterexamples, and in the other two cases Claim \ref{thm:createids} provides a family of counterexamples.

To provide a family of directed graphs which contain independent dominating sets whose Cartesian product does not contain an independent dominating set we define the following graphs. 
Define $W_n'$ to be a directed wheel on $n+1$ vertices in which the center vertex is dominating and the outside cycle is directed. 
We define $P'$, as an oriented paw with directed edges as in Figure \ref{fig:wheelandpaw}.

\begin{figure}\label{fig:wheelandpaw}
\begin{center}
    \begin{tikzpicture} 
      \foreach \i in {1,2,3}
        \node[draw,black, circle, fill = black, scale = .5] at ({360/3 *\i}:1.2) (v\i) {};
      \node[draw, black, fill = black, circle, scale =.5] at (0,0) (v0) {};
      
      \draw[thick, ->-] (v0) to (v1);
      \draw[thick, ->-] (v0) to (v3);
      \draw[thick, ->-] (v0) to (v2);
      \draw[thick, ->-] (v1) to (v2);
      \draw[thick, ->-] (v2) to (v3);
      \draw[thick, ->-] (v3) to (v1);
      
      \begin{scope}[xshift = 1in]
        \foreach \i in {1,2,3}
          \node[draw,black, circle, fill = black, scale = .5] at ({360/3 *\i}:1) (vp\i) {};
        \node[draw, black, circle, fill=black, scale = .5] at (2,0) (vp0) {};

      \draw[thick, ->-] (vp0) to (vp3); 
      \draw[thick, ->-] (vp1) to (vp2);
      \draw[thick, ->-] (vp2) to (vp3);
      \draw[thick, ->-] (vp3) to (vp1);
        
      \end{scope}
    \end{tikzpicture}
    \caption{The graphs $W_3'$ and $P'$.}
\end{center}
\end{figure}
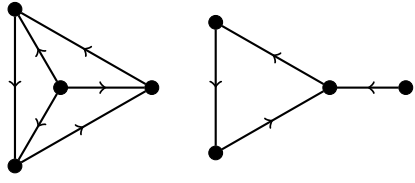

\begin{claim}\label{thm:destroyIDS}
$W_n' \square P'$ is IDS-free for all $n$ odd.
\end{claim}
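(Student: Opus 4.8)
The plan is to show that the forced structure at the source of the product collapses the whole question down to finding an independent dominating set of a directed odd cycle, which cannot exist. Throughout I fix notation matching the figure: the vertices of $W_n'$ are a center $c$ together with cycle vertices $1,\dots,n$, with arcs $c\to i$ for every $i$ and the directed cycle $1\to 2\to\cdots\to n\to 1$; the vertices of $P'$ are $p_0,p_1,p_2,p_3$ with arcs $p_0\to p_3$, $p_1\to p_2$, $p_2\to p_3$, $p_3\to p_1$. A vertex of $W_n'\square P'$ is a pair $(w,p)$, and from the definition of $\square$ its in-neighbours are exactly the pairs $(w',p)$ with $w'\to w$ in $W_n'$ together with the pairs $(w,p')$ with $p'\to p$ in $P'$. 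I would assume toward a contradiction that $I$ is an independent dominating set.

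The first step is to observe that $(c,p_0)$ is a source of the product: $c$ has in-degree $0$ in $W_n'$ and $p_0$ has in-degree $0$ in $P'$, so $(c,p_0)$ has no in-neighbour and must lie in $I$. Consequently its out-neighbour $(c,p_3)$ is excluded from $I$. Next I note that $(c,p_1)$ has a \emph{single} in-neighbour in the entire product, namely $(c,p_3)$ (its only in-arc comes from $p_3\to p_1$, and $c$ is a source); since $(c,p_3)\notin I$, domination forces $(c,p_1)\in I$. I then read off the consequences of $(c,p_0),(c,p_1)\in I$: their out-neighbours are excluded, so $(i,p_0)\notin I$ and $(i,p_1)\notin I$ for every cycle vertex $i$, and also $(c,p_2)\notin I$.

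With these exclusions in hand I would restrict attention to the copy of the directed $n$-cycle lying over $p_2$, namely $(1,p_2),\dots,(n,p_2)$, and set $A=\{\,i : (i,p_2)\in I\,\}$. Independence of $I$ forces $A$ to contain no two consecutive indices, since the arcs $(i,p_2)\to(i+1,p_2)$ are present. For domination, the in-neighbours of $(i,p_2)$ are exactly $(c,p_2)$, $(i-1,p_2)$ and $(i,p_1)$; the first and third are already known to be outside $I$, so $(i,p_2)$ can only be dominated by itself or by $(i-1,p_2)$. Hence every $i$ satisfies $i\in A$ or $i-1\in A$, so $A$ is also a dominating set of the directed cycle. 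Thus $A$ is an independent dominating set of the directed cycle $C_n$.

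The crux — the step I expect to carry all the weight — is that a directed cycle of odd length admits no independent dominating set, which is exactly the obstruction noted for the directed triangle in the introduction. Concretely, if $a\in A$ then $a+1\notin A$ by independence, so the predecessor of $a+2$ lies outside $A$ and domination forces $a+2\in A$; iterating gives $a+2k\in A$ for all $k$, and since $\gcd(2,n)=1$ for odd $n$ this sweeps out all of $\mathbb{Z}_n$, contradicting independence. Therefore no such $A$ exists, $I$ cannot exist, and $W_n'\square P'$ is IDS-free for every odd $n$. I would close by remarking that the fiber over $p_3$ is never needed — the entire contradiction is produced inside the single fiber over $p_2$ — and that oddness of $n$ enters only in this final parity argument, which is precisely why the statement breaks down for even $n$.
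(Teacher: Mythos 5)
Your proof is correct and follows essentially the same route as the paper: force the center copy of $P'$ to contribute $(c,p_0)$ and $(c,p_1)$, exclude their out-neighbours around the rim, and reduce to the nonexistence of an independent dominating set in a directed odd cycle. You simply make two steps more explicit than the paper does — identifying the $p_2$-fiber directly rather than via strongly connected components, and spelling out the parity argument for odd cycles — which is fine.
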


\begin{proof}
Let $n \in \mathbb{Z}$ be odd.
We notice that both $W_n'$ and $P'$ have unique independent dominating sets by following the source greedy algorithm. 
Let the dominating vertex of $W_n'$ be $v_d$
Also, in $W_n' \square P'$, the copy of $P'$ that appears in place of the dominating vertex of $W_n'$ must be dominated only by vertices of the form $(v_d, u)$ for some $u \in P'$. 
Hence the unique dominating set of $P'$ must be chosen for this copy of $P'$.
Therefore, all copies of the dominating set of $P'$ around the cycle cannot be included in an independent dominating set.

We now look at the strongly connected components of the graph induced by the vertices which have yet to be dominated.
There are two strongly connected components, both of which are cycles on $n$ vertices.
One of these vertices acts as a source in the directed acyclic graph created by contracting strongly connected components, hence it must be dominated only by vertices in its own strongly connected component.
This is impossible, since it is an odd cycle which is known to have no independent dominating set.
\end{proof}

\begin{claim}\label{thm:createids}
$C_n \square C_n$ where $n$ is odd contains an independent dominating set.
\end{claim}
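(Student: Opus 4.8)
The plan is to give an explicit construction, since every directed cycle in $C_n \square C_n$ has length a multiple of $n$, so the period equals $n$, which is odd, and Theorem~\ref{thm:eper} does not apply. Write the vertex set as $\mathbb{Z}_n \times \mathbb{Z}_n$, so that the out-neighbors of $(i,j)$ are exactly $(i+1,j)$ and $(i,j+1)$ with arithmetic taken modulo $n$. The idea is to select a union of diagonals: fix a set $X \subseteq \mathbb{Z}_n$ and take $S = \{(i,j) : i - j \equiv d \pmod n \text{ for some } d \in X\}$. Each diagonal $\{(i,j) : i - j \equiv d\}$ is itself independent, because both out-neighbors of a vertex change the value of $i-j$ by $+1$ or $-1$; this reflects the period-$n$ structure of the torus and pushes all of the work onto the choice of $X$.

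First I would choose $X$ to be an independent dominating set of the \emph{undirected} cycle on $\mathbb{Z}_n$, in which $k$ is adjacent to $k\pm 1$; such a set exists for every $n$ (for instance, any maximal independent set, as in an undirected graph greedy always succeeds). Thus $X$ enjoys the two properties that (i) no two elements of $X$ differ by $1$, and (ii) every element of $\mathbb{Z}_n \setminus X$ differs by $1$ from some element of $X$. These are exactly the independence and domination conditions for the undirected cycle, and I will transfer each to the directed torus.

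To verify independence, note that if $(i,j) \in S$ then $i - j \in X$, while its out-neighbors $(i+1,j)$ and $(i,j+1)$ carry values $(i-j)+1$ and $(i-j)-1$, neither of which lies in $X$ by property (i); hence no arc joins two vertices of $S$. To verify domination, take $(i,j) \notin S$, so $i - j \notin X$, and apply property (ii): either $(i-j)-1 \in X$ or $(i-j)+1 \in X$. In the first case the in-neighbor $(i-1,j)$ has value $(i-j)-1 \in X$ and so lies in $S$ and dominates $(i,j)$; in the second case the in-neighbor $(i,j-1)$ has value $(i-j)+1 \in X$ and dominates $(i,j)$. The one point that needs care is precisely this case analysis—matching each of the two undirected neighbors of $i-j$ in $X$ to the correct directed in-neighbor of $(i,j)$—after which $S$ is seen to be an independent dominating set. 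Finally I would remark that the construction in fact works for every $n$; for even $n$ the conclusion is already subsumed by Theorem~\ref{thm:eper}, so only the odd case genuinely requires this argument.
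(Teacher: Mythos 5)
Your proof is correct, and the set you build is in essence the same one the paper uses: the paper's explicit set $\{(v_i,u_{i+2j})\}$ is precisely a union of diagonals of the torus, namely those on which the difference of coordinates lies in $\{0,2,\dots,n-3\}$, and that difference set is exactly an independent dominating set of the undirected cycle on $\mathbb{Z}_n$. What you do differently is to make this structure explicit: you observe that every arc of $C_n\square C_n$ shifts the diagonal invariant $i-j$ by $\pm 1$, so the diagonal classes form an undirected $n$-cycle in the quotient sense, and any maximal independent set $X$ of that undirected cycle lifts to an independent dominating set of the torus. This buys several things over the paper's version: the verification is a clean two-line case analysis rather than index bookkeeping (the paper's own range ``$0\leq j\leq\lfloor n/2\rfloor$'' is off by one as written, since $j=\lfloor n/2\rfloor$ would put both $(v_i,u_{i-1})$ and $(v_i,u_i)$ in the set, violating independence; the intended range is $j\leq\lfloor n/2\rfloor-1$); your argument works uniformly for all $n$, not just odd $n$; and it produces a whole family of independent dominating sets, one for each choice of $X$. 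Your matching of the two undirected neighbors of $i-j$ to the two directed in-neighbors $(i-1,j)$ and $(i,j-1)$ is the one delicate step, and you carry it out correctly.
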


\begin{proof}
Let $n$ be odd.
It has been observed that each directed odd cycle does not have an independent dominating set.
It remains to provide an independent dominating set for $C_n \square C_n$.
Label the vertices of one $V(C_n) = \{v_0,\dots,v_{n-1}$ such that $(v_i, v_{i+1}) \in A(G)$ with addition modulo $n$, and for the other copy of $C_n$, $V(C_n) = \{u_0,\dots,u_{n+1}\}$ similarly.
We construct an independent dominating set of $C_n \square C_n$ as $D = \{(v_i,u_{i+2j}) | 0\leq i \leq n-1 \mod{n}, 0 \leq j \leq \lfloor \frac{n}{2}\rfloor\}$.
To see that the set is dominating, we notice that for any $0 \leq i \leq 4$, $(v_i,u_i)$ and $(v_i,u_{i+2})$ dominate all $(v_i,u_k)$ for $i \leq k \leq i+n-2\rfloor \mod{n}$, leaving only $(v_i,u_{i-1})$ not dominated.
But we have that $(v_{i-1}, u_{i-1}) \in D$ which dominates $(v_{i}, u_{i-1})$.
Therefore $D$ is dominating.
For $i$ fixed, we have $\{v_i,u_{i+2j})| 0 \leq j \leq \lfloor \frac{n}{2}\rfloor\}$ is independent since edges occur if and only if $(u_k, u_\ell) \in E(C_n)$, but we have only taken vertices of the same parity, without taking a full trip around the vertex set.
That is, in each $v_i$ we take only vertices $(v_i,u_j)$ where $i = j \mod 2$.
Hence, vertices $(v_i,u_j)$ and $(v_{i+1},u_k)$ we have no edges, since $j \neq k$.
Therefore $D$ is independent, thus an independent dominating set.
\end{proof}

\begin{theorem}
There exist infinitely many pairs of graphs $(G,H)$ such that \[i(G\square H) > \min { i(G)\gamma(H), \gamma(G)i(H)}\] and infinitely many pairs of graphs $(G',H')$ such that \[i(G'\square H') < \min { i(G')\gamma(H'), \gamma(G')i(H')}.\]
\end{theorem}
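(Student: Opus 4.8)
The plan is to exhibit two infinite families, both indexed by odd $n$: the family $\{W_n' \square P'\}$ from Claim \ref{thm:destroyIDS}, which is IDS-free, and the family $\{C_n \square C_n\}$ from Claim \ref{thm:createids}, which admits an independent dominating set. The first step is to record the relevant parameters of the factor graphs by running the source-greedy algorithm. I expect $i(W_n') = \gamma(W_n') = 1$ (the central dominating vertex alone forms the unique independent dominating set), $i(P') = \gamma(P') = 2$, and $\gamma(C_n) = (n+1)/2$ for odd $n$, the last because the complement of any dominating set of a directed cycle can contain no two consecutive vertices. Since $C_n$ is itself IDS-free, $i(C_n)$ takes whichever convention value we adopt for an IDS-free digraph. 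From the explicit construction in Claim \ref{thm:createids} I also record the upper bound $i(C_n \square C_n) \le n(n+1)/2$, and since any independent dominating set of a nonempty digraph must be nonempty, $i(C_n \square C_n) > 0$.

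Next I would split into the three proposed conventions for the value of $i$ on an IDS-free digraph, namely $0$, (number of vertices)$+1$, and $\infty$, and verify that in each convention one family realizes the strict inequality with ``$>$'' and the other realizes it with ``$<$''. For the product $W_n'\square P'$ the relevant quantity is $\min\{i(W_n')\gamma(P'),\, \gamma(W_n')i(P')\} = \min\{1\cdot 2,\, 1\cdot 2\} = 2$, a fixed finite number; so the convention $i=0$ yields $i(W_n'\square P') = 0 < 2$, while the conventions making $i$ large (either $4n+5$ or $\infty$) yield $i(W_n'\square P') > 2$. For the product $C_n\square C_n$ the situation reverses: under the $0$ convention the right-hand side is $\min\{0,\,0\} = 0 < i(C_n\square C_n)$, whereas under the $n+1$ or $\infty$ conventions it is at least $(n+1)\cdot(n+1)/2 = (n+1)^2/2$, which strictly exceeds the bound $n(n+1)/2$ on $i(C_n\square C_n)$.

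Assembling these observations, in every one of the three conventions the two families together supply both strict inequalities, and since each family is infinite the theorem follows. The one step requiring genuine care is the $C_n\square C_n$ computation under the finite ($n+1$) convention: there the quantities $i(C_n\square C_n) \le n(n+1)/2$ and $(n+1)^2/2$ are of the same order, so I must keep the bounds sharp enough to force a strict inequality, which holds precisely because $n(n+1)/2 < (n+1)^2/2$. The remaining verifications — the exact values of $i$ and $\gamma$ for the small graphs $W_n'$ and $P'$, and the value $\gamma(C_n) = (n+1)/2$ — are routine applications of the source-greedy algorithm together with the fact that every source must belong to any dominating set.
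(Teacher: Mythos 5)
Your proposal is correct and follows essentially the same route as the paper: the paper's proof is simply a citation of Claims \ref{thm:destroyIDS} and \ref{thm:createids}, combined with the preceding discussion that the $i(G)=0$ convention is handled by $W_n'\square P'$ and the $n+1$ and $\infty$ conventions by $C_n\square C_n$. You merely make explicit the parameter computations ($i(W_n')=\gamma(W_n')=1$, $i(P')=\gamma(P')=2$, $\gamma(C_n)=(n+1)/2$, and the bound $i(C_n\square C_n)\le n(n+1)/2<(n+1)^2/2$) that the paper leaves implicit.
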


\begin{proof}
This is a direct consequence of Claims \ref{thm:createids} and \ref{thm:destroyIDS}.
\end{proof}

\end{section}

\begin{section}{Time Complexity}

We note that finding the size of an independent dominating set in undirected graphs is a well known $NP$-complete problem, for example it is proven in the textbook of Garey and Johnson \cite{GareyJohnson}.

\begin{theorem}[Garey and Johnson \cite{GareyJohnson}]
Given a graph $G$ and constant $k$, determining existence of an independent dominating set $S$ such that $|S| \leq k$ is $NP$-complete.
\end{theorem}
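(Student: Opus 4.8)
The plan is to prove the two halves of $NP$-completeness separately: membership in $NP$, then $NP$-hardness by reduction. Membership is routine. Given a candidate set $S$, one checks in time polynomial in $|V(G)|$ that $|S| \le k$, that no edge has both endpoints in $S$ (independence), and that every vertex outside $S$ has a neighbor inside $S$ (domination). Since in an undirected graph an independent dominating set is exactly a maximal independent set, this certificate is verifiable quickly, so the decision problem lies in $NP$.

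For $NP$-hardness I would give a polynomial-time reduction from $3$-SAT. Given a formula $\phi$ with variables $x_1,\dots,x_n$ and clauses $C_1,\dots,C_m$, I would build a graph $G_\phi$ from two kinds of gadgets. A \emph{variable gadget} for $x_i$ contributes a pair of literal vertices $x_i,\bar x_i$ arranged so that any maximal independent set is forced to select exactly one of them, thereby encoding a truth assignment; I would attach a small forcing structure (for instance pendant or triangle attachments) whose only role is to guarantee that a literal vertex must be \emph{taken} rather than merely dominated, so that no assignment is left undetermined. A \emph{clause gadget} for $C_j$ contributes a vertex (or short path) adjacent to the literal vertices occurring in $C_j$, arranged so that the clause vertex is dominated without spending extra budget precisely when at least one of its literals is selected as true.

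The budget $k$ would then be set to exactly the forced contribution of the variable gadgets (one literal per variable, plus whatever fixed amount the forcing structures demand), leaving no slack to separately cover an unsatisfied clause. Correctness has two directions: from a satisfying assignment I read off the corresponding literal vertices, verify they are independent, and check that maximality is met because each clause gadget is dominated by a true literal, giving an independent dominating set of size $k$; conversely, from an independent dominating set of size at most $k$ I extract a consistent truth assignment and argue that the tight budget forces every clause gadget to be dominated by a genuine literal vertex, hence every clause is satisfied.

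The main obstacle is the interplay between independence and maximality: unlike ordinary domination, here I cannot freely add vertices to patch up coverage, and independence forbids adjacent selections, so the gadgets must be engineered to rule out ``cheating'' maximal independent sets that meet the size bound without corresponding to any assignment. Concretely, the delicate points are designing the forcing structures so that each variable gadget admits exactly the two intended minimal completions (true and false) and no third option, and ensuring a clause vertex can never be dominated cheaply from within its own gadget, so that it must rely on a truly-satisfied literal. Checking that the total gadget size remains polynomial and that the budget $k$ is tight is then straightforward bookkeeping.
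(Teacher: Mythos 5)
The paper does not prove this statement at all: it is quoted verbatim from the textbook of Garey and Johnson, so there is no in-paper proof to compare against. Judged on its own terms, your outline follows the standard route (membership in $NP$ by certificate checking, hardness by reduction from $3$-SAT with variable and clause gadgets), and the membership half is fine since an independent dominating set in an undirected graph is exactly a maximal independent set.

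The hardness half, however, has a genuine gap: the gadgets, which are the entire content of the reduction, are never actually constructed. You correctly identify the danger --- with only an edge $x_i\bar x_i$ per variable and a clause vertex $c_j$ joined to its literals, a maximal independent set of size at most $n$ can include clause vertices in place of literal vertices, so the extracted assignment need not satisfy the clauses --- but you then defer the fix to an unspecified ``forcing structure (for instance pendant or triangle attachments).'' Pendants and triangles behave differently here (a pendant neighbor of $x_i$ forces $x_i$ or the pendant into any maximal independent set, which changes the budget accounting; a triangle $\{x_i,\bar x_i,a_i\}$ admits the spurious choice $a_i$), so the choice is not a matter of taste: the correctness of both directions of the reduction, and the value of $k$, depend on which structure is used and on verifying that no ``cheating'' maximal independent set survives. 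As written, the argument establishes that a reduction of this shape is plausible, not that one exists; to close the gap you must fix a concrete gadget, state $k$ exactly, and prove both directions for that gadget. Since the result is classical, the honest alternative --- which is what the paper itself does --- is simply to cite Garey and Johnson (problem GT10, minimum maximal independent set) rather than reprove it.
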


\begin{corollary}
Given a directed graph $D$ and constant $k$, determining existence of an independent dominating set $S$ such that $|S| \leq k$ is $NP$-complete.
\end{corollary}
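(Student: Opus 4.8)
The plan is to give a polynomial-time reduction from the undirected independent dominating set problem, whose $NP$-completeness is quoted immediately above, to its directed analogue. First I would confirm membership in $NP$: given a candidate set $S \subseteq V(D)$, one can check in polynomial time that $|S| \leq k$, that $S$ induces no arc of $D$ (independence), and that every $v \in V(D) \setminus S$ has an in-neighbor in $S$ (domination). Hence the directed problem lies in $NP$, and it remains only to establish hardness.

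For hardness, I would take an arbitrary undirected graph $G$ and build a directed graph $D$ on the same vertex set by replacing each undirected edge $\{u,v\}$ with the two antiparallel arcs $(u,v)$ and $(v,u)$, leaving $k$ unchanged. This construction is clearly computable in time polynomial in the size of $G$, and since the paper explicitly permits antiparallel edges, $D$ is a legitimate instance of the directed problem.

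The heart of the argument is to verify that the two notions of independent dominating set coincide under this transformation. For independence: a pair $u,v$ spans an arc of $D$ exactly when $\{u,v\} \in E(G)$, so $S$ is independent in $D$ if and only if it is independent in $G$. For domination: because every arc is accompanied by its reverse, a vertex $v$ has an in-neighbor in $S$ in $D$ if and only if $v$ has a neighbor in $S$ in $G$; thus $S$ dominates in $D$ if and only if it dominates in $G$. Combining the two, $G$ has an independent dominating set of size at most $k$ if and only if $D$ does, which completes the reduction.

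I expect no genuine obstacle here; the single point requiring care is that domination is in general direction-sensitive, so one must verify explicitly that the antiparallel orientation restores the symmetry needed for the domination condition to transfer in both directions. Once this equivalence is in hand, combining membership in $NP$ with the reduction yields $NP$-completeness of the directed problem.
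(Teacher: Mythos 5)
Your proposal is correct and follows essentially the same route as the paper: replace each undirected edge with a pair of antiparallel arcs and observe that independence and domination are preserved in both directions. You are in fact slightly more careful than the paper, since you also verify membership in $NP$ explicitly, which the paper's proof omits.
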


\begin{proof}
Suppose that we have some oracle $f$ for directed independent dominating sets. 
For $G$, an undirected graph, we may create a corresponding directed graph by replacing every edge with a pair of antiparallel edges, creating a graph $G'$. 
We run $f$ on $G'$. 
By returning whichever result comes from running $f$ on $G'$, we have answered the problem for the undirected graph $G$. 
We see this, since $S$ is an independent set in $G$ if and only if $S$ is an independent set in $G'$ by construction. 
Also $S$ is an dominating set in $G$ if and only if $S$ is an dominating set in $G'$ by construction.
\end{proof}

We notice that the existence of an independent dominating set in a graph $G$ of order $n$ is equivalent to determining if there exists an independent dominating set of order at most $n$. 
In particular, this problem is trivial for undirected graphs since all graphs contain an independent dominating set. 
We seek to determine if for directed graphs determining the existence of an independent dominating set $S$ such that $|S| \leq n$ is $NP$-complete.

\begin{claim}
Given a directed acyclic graph $D$, determining the existence of an independent is in $P$.
\end{claim}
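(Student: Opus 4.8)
The plan is to observe that this decision problem is, in a strong sense, already resolved by Theorem~\ref{thm:DAG}: every directed acyclic graph contains an independent dominating set, so on any legal input the correct answer is simply ``yes''. To certify membership in $P$ it therefore suffices to exhibit a single polynomial-time procedure, and the natural candidate is the source-greedy algorithm (SGA), which Theorem~\ref{thm:DAG} already shows always succeeds on a DAG. First I would confirm that the input really is acyclic, which can be done in $O(|V|+|A|)$ time by attempting a topological ordering (for instance via Kahn's algorithm); if a cycle is detected we are outside the promised family, and otherwise we proceed to construct the set.

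Next I would argue that SGA itself runs in polynomial time. The idea is to maintain the in-degree $d^-(v)$ of every vertex together with a queue of current sources (those with $d^-(v)=0$). At each step we pop a source $v$, place it in the set being built, and delete $v$ together with $N^+(v)$; each such deletion decrements the in-degrees of the affected out-neighbors and may create new sources, which are pushed onto the queue. Since every vertex is deleted at most once and each deletion inspects only the arcs incident to the deleted vertices, the total work is $O(|V|+|A|)$. Because $D$ is acyclic, a source exists at every stage until the graph is empty, so the algorithm terminates having labeled each vertex as either chosen or dominated by a chosen vertex, exactly as in the proof of Theorem~\ref{thm:DAG}.

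Finally, I would tie the pieces together: the acyclicity check followed by a single run of SGA runs in polynomial time and is guaranteed to output an independent dominating set whenever the input is a DAG, so the decision problem lies in $P$. I do not expect any genuine obstacle here, since existence is already guaranteed; the entire content is in the running-time bookkeeping. The only point that requires care is verifying that repeatedly finding sources and performing deletions can be amortized to linear time by using the in-degree counters and the source queue, rather than naively rescanning the whole graph to locate a source at each iteration.
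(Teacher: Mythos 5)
Your proposal is correct and follows essentially the same route as the paper, which simply invokes the polynomial-time source-greedy algorithm from the proof of Theorem~\ref{thm:DAG}; you merely supply the implementation details (in-degree counters and a source queue) that the paper leaves implicit.
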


\begin{proof}
By the proof of \ref{thm:DAG}, we provide an algorithm that is polynomial in time.
\end{proof}

\begin{theorem}
Given a directed graph $D$ with even period $h$, determining the existence of an independent dominating set is in $P$.
\end{theorem}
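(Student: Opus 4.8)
The plan is to turn the constructive proof of Theorem~\ref{thm:eper} into a polynomial-time algorithm. Because Theorem~\ref{thm:eper} already guarantees that every digraph of even period admits an independent dominating set, the decision question always has answer ``yes''; the real content is to exhibit a witnessing set in polynomial time. That proof builds the set as the union of the even-indexed cyclic classes $S_0, S_2, S_4, \dots$ coming from the Frobenius structure theorem, so the whole task reduces to computing the period $h$ together with the partition $S_0, \dots, S_{h-1}$ efficiently: once these classes are in hand, forming their union and verifying independence and domination is clearly polynomial.

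First I would compute the period and the classes by a breadth-first labeling. Since every directed cycle lies inside a single strongly connected component, I would decompose $D$ into its strongly connected components in linear time (for instance by Tarjan's algorithm) and work inside each nontrivial component $C$. Fixing a root $r \in C$ and running a BFS along the arcs of $C$, assign each vertex $v$ the label $\ell(v)$ equal to its depth in the BFS tree; then set $h_C = \gcd\{\,\ell(u)+1-\ell(v) : (u,v)\in A(C)\,\}$ and declare the class of $v$ to be $\ell(v) \bmod h_C$. Strong connectivity forces every arc to advance the cyclic class by exactly one, so the labels are consistent modulo $h_C$ and $h_C$ equals the gcd of all cycle lengths in $C$, namely the period of $C$. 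The global period is then the gcd of the $h_C$, which I can test for evenness, and the classes $S_0,\dots,S_{h-1}$ are obtained by aligning the per-component labelings.

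Having the partition, I would output the union of the even-indexed classes; its correctness is exactly the content of the proof of Theorem~\ref{thm:eper}. Each stage — the strong-component decomposition, the BFS, the $\gcd$ computations, and the assembly and verification of the final set — runs in time polynomial in $|V|+|A|$, so the entire procedure is polynomial and the problem lies in $P$.

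The step I expect to be the main obstacle is the correctness and globalization of the cyclic-class computation. I must argue that the BFS-based labeling recovers the genuine period (the gcd over \emph{all} directed cycles, not merely those met by one particular search tree) and, more delicately, that the permutation $P$ whose existence is only \emph{asserted} in the Frobenius structure theorem can actually be produced algorithmically. Reconciling the independently computed component periods into a single globally consistent partition, and dealing with vertices that lie on no cycle (whose class is not pinned down by any cycle through them, and which one may first strip off via the source-greedy algorithm), is where the bookkeeping is most delicate.
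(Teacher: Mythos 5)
Your proposal matches the paper's proof in its essentials: the paper likewise builds the classes by breadth-first search, reduces the layer labels modulo $h$, and takes all vertices in even layers as the independent dominating set. Your opening observation---that Theorem~\ref{thm:eper} already forces the answer to be ``yes,'' so the decision problem is in $P$ before any witness is even constructed---is a clean shortcut the paper does not state explicitly, and it also sidesteps the globalization and non-strongly-connected bookkeeping you flag as the delicate step.
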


\begin{proof}
Since we know the period of $D$ is $h$, we can construct $h$ independent sets using breadth first search by creating layers modulo $h$ (that is, the $h^\text{th}$ layer is the same as the first vertex chosen in $O(n^2)$ time. Then we select all vertices in even layers as our independent set.
\end{proof}

\begin{question}
Is it true that given a directed graph $D$, determining the existence of an independent dominating set is in $P$? 
\end{question}


In the case that the question above has a negative answer, we provide an algorithm produced by the proof of Theorem \ref{thm:eper}, which gives an exponential time algorithm superior to the brute force algorithm in the case that the period of the digraph is not $1$. 
We note that the period of a digraph can be determined in polynomial time, as proven by Jarvis and Sheir \cite{JarvisShier}.

\begin{theorem}
There exists an $O(2^{\frac{n}{h}})$ algorithm for determining the existence of an independent dominating set in a graph $D$ of period $h$.
\end{theorem}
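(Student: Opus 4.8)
The plan is to exploit the layered partition $S_0, \dots, S_{h-1}$ supplied by the Frobenius structure theorem, in which every arc runs from some $S_i$ to $S_{i+1}$ (indices modulo $h$) and each $S_i$ is independent. The essential observation is that fixing the trace $I \cap S_j$ of a hypothetical independent dominating set $I$ on a single layer $S_j$ forces the trace of $I$ on every other layer. Indeed, consider any $v \in S_{j+1}$: all of its in-neighbors lie in $S_j$, so $v$ is dominated precisely when some in-neighbor of $v$ already lies in $I \cap S_j$. If this happens, then placing $v$ in $I$ would destroy independence, so $v$ is forced \emph{out}; if it does not happen, then $v$ has no available dominator and must itself be placed in $I$, so $v$ is forced \emph{in}. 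Thus $I \cap S_{j+1}$ is completely determined by $I \cap S_j$, and iterating this rule once around the cycle determines $I \cap S_{j+2}, \dots, I \cap S_{j-1}$ in turn.

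This reduces the entire search to a single guess on one layer. First I would compute $h$ in polynomial time \cite{JarvisShier}, build the partition, and select a smallest part $S_j$; since the $h$ parts cover all $n$ vertices, $|S_j| \le n/h$. For each of the at most $2^{|S_j|} \le 2^{n/h}$ subsets $A \subseteq S_j$, I set $I \cap S_j = A$ and run the deterministic forcing propagation above in $O(n^2)$ time. The propagation itself certifies independence and domination for every vertex of $S_{j+1}, \dots, S_{j-1}$, so the only constraints left to verify are those carried by the arcs from $S_{j-1}$ back into $S_j$: one checks that every $v \in A$ has no in-neighbor in the forced set $I \cap S_{j-1}$ (independence) and that every $v \in S_j \setminus A$ has at least one in-neighbor in $I \cap S_{j-1}$ (domination). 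Together these two checks are exactly the statement that $A$ agrees with the set that $S_{j-1}$ would itself force on $S_j$, so if some $A$ passes both, the resulting $I$ is an independent dominating set, and if none does, then no independent dominating set exists, since any such set is the unique forced continuation of its own trace on $S_j$.

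Summing over the $2^{n/h}$ guesses, each processed in polynomial time, gives the claimed $O(2^{n/h})$ bound (the polynomial factor being suppressed, as the exponential term dominates for fixed $h > 1$). This beats the $2^n$ brute-force search whenever $h > 1$, and in the extremal odd case $h = 3$ it yields $2^{n/3} \approx 1.26^n$, matching the bound advertised in the introduction.

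The step I expect to be the main obstacle is making the reduction to the strongly connected, source-free setting clean, since the Frobenius partition is only guaranteed for irreducible adjacency matrices. One must first strip sources via the source-greedy algorithm and argue, as in the earlier structural theorems, that the existence question descends to the strongly connected components before the layered partition and the bound $|S_j| \le n/h$ can be invoked. The forcing argument within a single periodic strongly connected block is the conceptual heart of the proof; once it is justified, the enumeration over $S_j$ and the closing-cycle consistency check are routine.
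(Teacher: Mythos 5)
Your proposal is correct and follows essentially the same route as the paper: partition the vertices into the $h$ cyclic layers from the Frobenius structure theorem, observe that the trace of an independent dominating set on one layer forces its trace on all others via $I \cap S_{i+1} = S_{i+1} \setminus N^+(I \cap S_i)$, and brute-force over the $2^{n/h}$ subsets of a smallest layer. You actually spell out more than the paper does (the wrap-around consistency check and the caveat about first reducing to strongly connected, source-free pieces, which the paper glosses over).
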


\begin{proof}
Similar to the above proof, we may partition the vertices of $D$ into $h$ independent sets $S_0,\dots,S_{h-1}$ such that edges follow cyclically. 
We note now that if $h$ is even, we have an independent dominating set, so we may assume that $h$ is odd. 

Let $S_k$ be the smallest independent set in our partition of the vertices, then $|S_k| \leq \frac{n}{h}$. 
We notice now that the selection of vertices in one part forces the structure of the rest of the independent dominating set. 
That is, let $D$ be an independent dominating set of $G$, then $D$ is the union of $S_i \cap D$, $S_{i+1} - N^+(S_i \cap D)$, $S_{i+2} - N^+(S_{i+1}-N^+(S_i \cap D))$, $dots$. 
Note that this observation gives us that $S_{i-1} - (S_{i-1} \cap D) \subseteq N^-(S_i \cap D)$. 
In particular, we may search among only the smallest independent set for the independent set giving the desired bound. 
Since there are $h$ parts, there exists at least one part of size at most $n/h$, and a brute force search among each of the subsets of these vertices will be $O(2^{\frac{n}{h}})$ time.
\end{proof}

\begin{corollary}
For any digraph $D$ with period $h \neq 1$, there exists an $O(1.26^n)$ algorithm to determine existence of an independent dominating set.
\end{corollary}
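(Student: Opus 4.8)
The plan is to split on the period $h$ and observe that the only nontrivial regime is odd $h \geq 3$, where the worst case is $h = 3$.

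First I would dispose of the easy cases. Since $h \neq 1$, the period is $0$, even, or odd with $h \geq 3$. If $h = 0$, then $D$ is a directed acyclic graph and the earlier claim provides a polynomial-time algorithm; if $h$ is even, the earlier theorem on even period likewise runs in polynomial time. In both cases the running time is $O(\mathrm{poly}(n))$, which is comfortably $O(1.26^n)$.

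Next, in the remaining case of odd $h \geq 3$, I would invoke the preceding theorem, which yields an $O(2^{n/h})$ algorithm. The key observation is that the exponent $n/h$ is decreasing in $h$, so among all admissible periods the running time is maximized at the smallest odd value, namely $h = 3$. At $h = 3$ this gives $O(2^{n/3}) = O\bigl((2^{1/3})^n\bigr)$, and since $2^{1/3} = 1.2599\ldots < 1.26$, the bound $O(1.26^n)$ follows. For every larger odd $h$ the base $2^{1/h}$ is strictly smaller, so the estimate holds uniformly across all $h \neq 1$.

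There is no genuine obstacle here: all the algorithmic content lives in the previous theorem, and the corollary reduces to identifying $h = 3$ as the worst case and checking the numerical inequality $2^{1/3} < 1.26$. The only point warranting a remark is that the cases $h = 0$ and $h$ even contribute nothing to the exponential bound, so the stated constant is governed entirely by the smallest odd period.
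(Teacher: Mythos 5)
Your proof is correct and matches the paper's argument: both dispose of the acyclic and even-period cases via the earlier polynomial-time results, and both identify $h=3$ as the worst odd case for the $O(2^{n/h})$ algorithm, with $2^{1/3} \approx 1.2599 < 1.26$. Your write-up is actually more explicit than the paper's one-sentence proof about why $h=3$ dominates.
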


\begin{proof}
Then the slowest algorithm provided in the proof above for odd degree is $h=3$, yielding an $O(2^{\frac{n}{3}}) \leq O(1.26^n)$ algorithm, since directed acyclic graphs and graphs with even period are in $P$.
\end{proof}

\end{section}

\begin{section}{Constructions}
One may wonder if all graphs with odd period have no independent dominating set. 
We now provide examples for each odd period of infinite families of graphs which have independent dominating sets and which do not have independent dominating sets. 
We start with a few lemmas to work toward constructions of infinite families of graphs with specific period that contain independent dominating sets, and that do not contain independent dominating sets. 

\begin{lemma}\label{lem:nonem}
Let $D$ be a digraph with odd period $h$ and vertex partition $S_1,\dots,S_h$ such that for every edge $(u,v) \in A(D)$, $u \in S_i$ and $v \in S_{i+1}$ for some $i \in [h]$ with addition modulo $h$. Every independent dominating set $I$ has $S_i \cap I \neq \emptyset$ and $S_i \cap I \neq S_i$ for all $i \in [h]$.
\end{lemma}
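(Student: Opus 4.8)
The plan is to translate the independence and domination conditions into a single recursive identity relating consecutive parts, and then exploit the parity of $h$. Throughout, write $I_i = S_i \cap I$ and $\bar I_i = S_i \setminus I$, and record first the structural consequences of the hypothesis: since every arc runs from some $S_i$ into $S_{i+1}$, every vertex of $S_{i+1}$ has all of its in-neighbours inside $S_i$ and every vertex of $S_i$ has all of its out-neighbours inside $S_{i+1}$; in particular each $S_i$ is independent.

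The key step is to derive $\bar I_{i+1} = N^+(I_i)$ for every $i$ (indices modulo $h$). One inclusion is independence: if $u \in I_i$ and $(u,v) \in A(D)$ then $v \notin I$, and since $v \in S_{i+1}$ this yields $N^+(I_i) \subseteq \bar I_{i+1}$. The reverse inclusion is domination: any $v \in \bar I_{i+1}$ must have an in-neighbour in $I$, which necessarily lies in $S_i \cap I = I_i$, so $\bar I_{i+1} \subseteq N^+(I_i)$. Equivalently $I_{i+1} = S_{i+1} \setminus N^+(I_i)$, which is exactly the propagation already noted in the proof of the $O(2^{n/h})$ algorithm.

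Next I would use this identity to show that ``extreme'' parts propagate cyclically with a flip. If $I_i = \emptyset$ then $\bar I_{i+1} = N^+(\emptyset) = \emptyset$, so $I_{i+1} = S_{i+1}$ is full; conversely, if $I_i = S_i$ then $\bar I_{i+1} = N^+(S_i) = S_{i+1}$, so $I_{i+1} = \emptyset$ is empty. Thus ``empty'' forces ``full'' in the next part and ``full'' forces ``empty'', flipping type at each step. Suppose for contradiction that some part is extreme. Walking the alternation around the cycle and returning to the starting index after $h$ steps flips the type $h$ times; since $h$ is odd, the part would have to equal its own opposite type, forcing $\emptyset = S_i$ for some nonempty $S_i$, a contradiction. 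Hence no $I_i$ is empty or full, i.e. $S_i \cap I \neq \emptyset$ and $S_i \cap I \neq S_i$ for all $i$.

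The step I would be most careful about is the identity $N^+(S_i) = S_{i+1}$ used in the ``full forces empty'' implication: it requires that every vertex of $S_{i+1}$ has an in-neighbour in $S_i$, i.e. that every vertex has in-degree at least one. This is where strong connectivity enters, via the earlier claim that every vertex of a strongly connected digraph has in- and out-degree at least one; the Frobenius partition is produced in exactly that setting. I would make this hypothesis explicit, since without it the statement can fail (a source placed in some $S_i$ can leave the following part empty while an independent dominating set still exists). I would also note that each $S_i$ is nonempty, which is part of the same structural input and is needed to close the parity contradiction.
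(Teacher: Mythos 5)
Your proof is correct and follows essentially the same route as the paper: both propagate the forced alternation between empty and full intersections from part to part (via the observation that only $S_i$ can dominate $S_{i+1}$) and use the oddness of $h$ to derive a contradiction. You are also right to flag that the argument silently relies on strong connectivity (every vertex having positive in-degree and each $S_i$ being nonempty); the paper's proof invokes this mid-argument without stating it as a hypothesis of the lemma.
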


\begin{proof}
Let $D$ a digraph with odd period $h$ and $S_i$ be as in the statement of the theorem for $1 \leq i \leq h$. If $h=1$, the statement is clear.

Suppose $h > 1$ and assume for contradiction that there exists some $S_i$ such that $S_i \cap I = \emptyset$. 
We notice that the only vertices which can dominate the vertices of $S_{i+1}$ are in $S_i$ or the vertices themselves. 
Therefore, $S_{i+1} \cap I = S_{i+1}$. 
Since the digraph is strongly connected, every vertex in $S_{i+2}$ has a neighbor in $S_{i+1}$, hence $S_{i+2} \cap I = \emptyset$. 
By a similar argument, we see that $S_{i + 2l} \cap I = \emptyset$ for all $l$, with addition modulo $h$. 
Since $h$ is odd, for each $1\leq j\leq h$ there exists some $k$ such that $S_j = S_{i + 2k}$.
Therefore $S_i \cap I = \emptyset$ for all $1 \leq i \leq h$, a contradiction with $I$ being an independent dominating set. 
The argument that $S_i \cap I \neq S_i$ is similar.
\end{proof}

This lemma gives us a simple way to create infinite families of graphs which do not contain independent dominating sets for each period. 
Namely, any strongly connected digraph with odd period $h$ in which the decomposition into $h$ independent sets has at least one set of size $1$ cannot have an independent dominating set. 
We seek to find a family more rich in structure which has no independent dominating set, which will lead to a very similar family that does contain independent dominating sets.

\begin{lemma} \label{thm:const}
For each odd integer $h>1$, there exists an infinite family of graphs $\mathcal{F}$ with period $h$ such that for all $D \in \mathcal{F}$, $D$ is independent dominating set-free. 
\end{lemma}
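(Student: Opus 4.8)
The plan is to realize the family $\mathcal{F}$ as \emph{blow-ups} of the directed $h$-cycle, with Lemma \ref{lem:nonem} supplying the obstruction to any independent dominating set. Concretely, for each odd $h>1$ and each tuple of integers $(n_1,\dots,n_h)$ with every $n_i\geq 2$, I would build a digraph $D$ whose vertex set is partitioned into independent sets $S_1,\dots,S_h$ with $|S_i|=n_i$, and whose arc set consists of \emph{all} arcs directed from $S_i$ to $S_{i+1}$ (indices modulo $h$); that is, between consecutive parts we place a complete bipartite digraph oriented from $S_i$ toward $S_{i+1}$. Letting the tuple vary -- for instance fixing $n_2=\dots=n_h=2$ and letting $n_1$ grow -- produces infinitely many pairwise non-isomorphic digraphs, so it suffices to establish the required properties for a single generic member. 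Keeping every $n_i\geq 2$ is what makes these examples genuinely ``rich'', in contrast to the trivial size-$1$ family remarked on after Lemma \ref{lem:nonem}.

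First I would verify that each $D$ is strongly connected with period exactly $h$. Strong connectivity is immediate, since every vertex of $S_i$ reaches every vertex of $S_{i+1}$, so by following the parts cyclically one reaches any vertex from any other. For the period, observe that every arc advances the part index by exactly one modulo $h$, so the length of every directed cycle is a multiple of $h$; since the parts are nonempty, picking one vertex per part yields a directed cycle of length exactly $h$, and therefore the greatest common divisor of all directed cycle lengths is $h$. This computation simultaneously checks that $D$, together with the partition $S_1,\dots,S_h$, meets the hypotheses of Lemma \ref{lem:nonem}.

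The heart of the argument is then a one-line contradiction. Suppose $D$ had an independent dominating set $I$. By Lemma \ref{lem:nonem}, $S_i\cap I\neq\emptyset$ for every $i$; in particular we may choose $u\in S_1\cap I$ and $w\in S_2\cap I$. But the digraph between $S_1$ and $S_2$ is complete bipartite, so $(u,w)\in A(D)$, contradicting the independence of $I$. Hence no independent dominating set exists, and $\mathcal{F}$, taken over all admissible tuples, is the desired infinite family of period-$h$ IDS-free digraphs.

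The main obstacle I anticipate is not the no-IDS step, which Lemma \ref{lem:nonem} renders essentially automatic, but rather pinning down the period: I must ensure the construction neither creates shorter cycles (which would drive the period to a proper divisor of $h$) nor forces it up to a proper multiple. The ``every arc advances the index by exactly one'' bookkeeping controls both directions at once, so the verification reduces to exhibiting a single cycle of length $h$ and noting that no cycle can have length not divisible by $h$. A secondary point worth being careful about is confirming that the varying members are genuinely distinct as graphs; letting the order $n_1+\dots+n_h$ increase without bound settles this cleanly, and the same complete-bipartite skeleton will serve as the template for the companion IDS-\emph{containing} family in the next construction.
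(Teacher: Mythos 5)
Your proof is correct, but it takes a genuinely different and much more elementary route than the paper. You take the complete blow-up of the directed $h$-cycle (all arcs from $S_i$ to $S_{i+1}$, each part of size at least $2$): the period computation is right, Lemma \ref{lem:nonem} applies since your graphs are strongly connected, and the contradiction is immediate because the lemma forces $I$ to meet two consecutive parts that are completely joined. The paper instead builds an intricate graph $D_{h,k}$ whose parts alternate between a ground set $[k]$ and copies of its power set, with arcs given by membership and non-membership; its proof tracks how a choice of $S_0\cap I$ propagates around the cycle and uses the odd length to close the contradiction. The reason for the extra machinery is stated explicitly in the text after Lemma \ref{lem:nonem}: the authors already observe that cheap IDS-free families exist (e.g.\ with a size-one part) and deliberately want a family ``more rich in structure,'' one in which independent sets can straddle several parts and which, after a one-arc-rule modification, yields the companion family \emph{with} independent dominating sets in the following theorem. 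Your construction proves the lemma as literally stated, and more cleanly; what it gives up is that richness and the reusability — note that your closing remark about the ``same complete-bipartite skeleton'' serving for the IDS-containing family cannot work as stated, since every complete blow-up of an odd cycle is IDS-free by your own argument, so you would have to delete arcs there.
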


\begin{proof}
We will construct a graph $D_{h,k}$ with period $h$ for any $2 \leq k$ which has no independent dominating set. 
We will use the fact that since $D$ has period $h$, it can be partitioned into $h$ independent sets $S_0,\dots,S_{h-1}$ such that for all edges $(u,v)$, $u \in S_i$ and $v \in S_{i+1}$ for some $0 \leq i \leq h-1$ with addition modulo $h$. 
We will create $S_0,\dots,S_{h-1}$ as such a partition. 
Let $k \geq 2$.

We create a special graph for the case $h = 3$. 
Let the vertices of $S_0$ be $k$ vertices labelled $1$ to $k$, the vertices of $S_1$ be all subsets of $[k]$, and the vertices of $S_2$ be a copy of the vertices of $S_1$. We draw edges between $u \in S_0$ and $X \in S_1$ if and only if $u \in X$, between $X \in S_1$ and $Y \in S_2$ if and only if $X = Y$, and between $Y \in S_2$ and $v \in S_0$ if and only if $v \in Y$. 

Let $h > 3$. 
Let $S_0$ be $k$ vertices, labelled $1$ to $k$. Define $S_1$ to be all nonempty and not full subsets of $[k]$. 
With edges from $u \in S_0$ to $X \in S_1$ if and only if $u \in X$. Then $S_2$ is a copy of $S_1$ with edges from $X \in S_1$ to $Y \in S_2$ if and only if $X=Y$. $S_3$ will have $k$ vertices, again labelled from $1$ to $k$, with edges $(Y,v)$ from $S_2$ to $S_3$ if and only if $v \notin Y$.
For each $j \leq h-3$ odd, the vertex set of $S_j$ is $k$ vertices labelled $1$ to $k$, and $S_{j+1}$ will have vertices corresponding to subsets of $[k]$ with edges from $\ell \in S_j$ to $Z \in S_{j+1}$ if and only if $\ell \notin Z$ and edges $Z \in S_{j+1}$ to $m \in S_{j+2}$ if and only if $m \notin Z$. For the final independent sets, we follow that $S_{h-2}$ is a set of size $k$ labeled from $1$ to $k$, create $S_{h-1}$ as all subsets of $[k]$, with have edges from $u \in S_{h-2}$ to $X \in S_{h-1}$ if and only if $u \in X$, and finally from $Y \in S_{h-1}$ to $v \in S_0$ if and only if $v \notin Y$. 
See Figure \ref{fig:g53} for an example of $D_{5,3}$.

For any independent dominating set $I$, we claim that $|S_0 \cap I| = 1$. 
Suppose for contradiction that $|S_0 \cap I|  \geq 2$ without loss of generality we may assume that $S_0 \cap I \supseteq \{1,2\}$. Then $S_1 \cap I$ is contains all sets which contain neither $1$ nor $2$. 
Then we have that $S_3 \cap D$ is all sets which contain either $1$ or $2$, in particular, both the set $1$ and $2$ are in the dominating set, and in $S_4$ ${1}$, and ${2}$ dominate $S_5$ since $1$ does not contain $2$ and $2$ does not contain $1$. 
Therefore $S_5 \cap D = \emptyset$. A contradiction with Lemma \ref{lem:nonem}. Indeed, $|S_0 \cap D| = 1$.

Since all vertices of $S_0$ are the same up to isomorphism, and every independent set must have nonempty intersection with the dominating set, we may assume that $S_0 \cap I = {1}$. 
Therefore in $S_1$, only vertices not containing $1$ can be in the dominating set.
Hence in every vertex in $S_3 \cap I$ contains a $1$. 
Therefore $S_4 \cap D$ must contain $1$, and $S_5 \cap D$ must contain only vertices which have a $1$. 
So $S_4 \cap D = S_{4 + 2j} \cap D = 1$ and $S_3 \cap D = S_{3+2j} \cap D$ is all subsets which contain $1$ for all $j$ such that $3 + 2j \leq h-2$. 
At $S_{h-2}$ we have edges from a $k$ set to subsets by inclusion, hence $S_{h-1} \cap D$ is all subsets not containing $1$. 
But these subsets all point to $1$ which is assumed to be in the set, a contradiction with $D$ being independent. 
Therefore no independent dominating set exists.
\end{proof}

\begin{center}
    \begin{figure}
        \label{fig:g53}
        \centering
        \begin{tikzpicture}[scale = .6]
            \node[rectangle, draw, rounded corners, label={$S_2$}] at ({360/5 * 2}:3.5) (S2) {$\emptyset \neq X \subsetneq \{1,2,3\}$};
            \node[rectangle, draw, rounded corners, label=below:{$S_3$}] at ({360/5 * 3}:3.5) (S3) {$\emptyset \neq Y \subsetneq \{1,2,3\}$};
            \node[rectangle, draw, rounded corners, label=right:{$S_5$}] at ({360/5 * 5}:3.5) (S5) {$\emptyset \neq Z \subsetneq \{1,2,3\}$};
    
            \node[circle, draw, label={$S_1$}, minimum size=1.5cm] at ({360/5 * 1}:4) (S1) {};
            \node[circle, draw, label=below:{$S_4$}, minimum size=1.5cm] at ({360/5 * 4}:4) (S4) {};
                \node[circle, scale = .5, draw, fill=black, label=below:{1}] at ({360/5}:3) (v11) {};
                \node[circle, scale = .5, draw, fill=black, label=below:{2}] at ({360/5}:4) (v12) {};
                \node[circle, scale = .5, draw, fill=black, label=below:{3}] at ({360/5}:5) (v13) {};

            \foreach \i in {1,2,3}
                \node[circle, scale = .5, draw, fill=black, label = 1] at ({360/5 * 4}:3) (v41) {};
                \node[circle, scale = .5, draw, fill=black, label = 2] at ({360/5 * 4}:4) (v42) {};
                \node[circle, scale = .5, draw, fill=black, label = 3] at ({360/5 * 4}:5) (v43) {};
%
        \draw[line width = .2cm, ->-, color=black!50] (S1) -- (S2) node[midway, above = .3cm, color=black] {$i \notin X$};
        \draw[line width = .2cm, ->-,  color=black!50] (S2) -- (S3) node[midway, left = .3cm, color=black] {$X = Y$};
        \draw[line width = .2cm, ->-,  color=black!50] (S3)-- (S4) node[midway, below = .3cm, color=black] {$i \notin Y$};
        \draw[line width = .2cm, ->-, color=black!50] (S4) -- (S5) node[midway, right = .3cm, color=black] {$i \in Z$};
        \draw[line width = .2cm, ->-, label={not included}, color=black!50] (S5) -- (S1) node[midway, right = .3cm, color=black] {$i \notin Z$};
        \end{tikzpicture}
        \caption{The digraph $D_{5,3}$.}
    \end{figure}
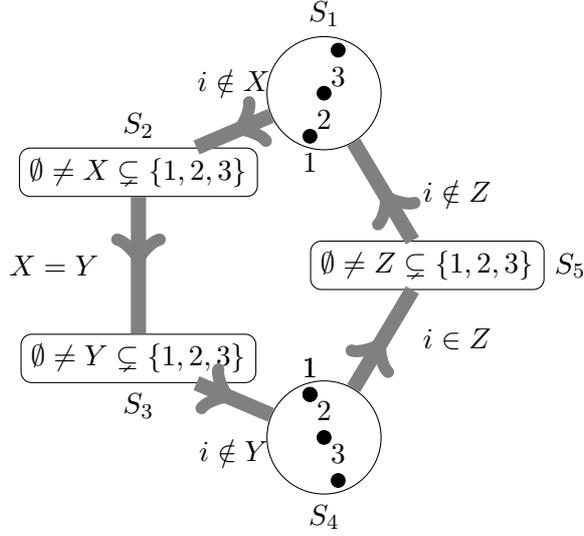
\end{center}

By altering this construction slightly, we instead get a nontrivial family of graphs with odd period which contain independent dominating sets. 
This shows that only knowing the period of a graph is not sufficient for determining existence of an independent dominating set.

\begin{lemma}
In a directed graph $D$ with odd period $h$ and decomposition into independent sets $S_0,\dots,S_{h-1}$ such that vertices in $S_i$ are adjacent only to vertices in $S_{i+1}$ with addition modulo $h$, an independent dominating set $I$ is defined entirely by $S_i \cap I$ for any $0\leq i \leq h-1$.
\end{lemma}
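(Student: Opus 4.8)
The plan is to show that knowing $S_i \cap I$ for a single index $i$ pins down $S_{i+1}\cap I$, and then to propagate this determination cyclically around the partition. The engine is precisely the forcing observation already used in the proof of the $O(2^{n/h})$ algorithm: because every arc runs from some $S_j$ to $S_{j+1}$, the only vertices that can dominate a vertex of $S_{i+1}$ are its in-neighbors, all of which lie in $S_i$.

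First I would fix $i$ and establish the key identity
\[
S_{i+1}\cap I = S_{i+1}\setminus N^+(S_i\cap I),
\]
proving both inclusions. For the forward inclusion, suppose $v\in S_{i+1}\cap I$; if $v$ had an in-neighbor $u\in S_i\cap I$, then $(u,v)$ would be an arc between two vertices of $I$, contradicting independence, so $v\notin N^+(S_i\cap I)$. For the reverse inclusion, suppose $v\in S_{i+1}$ with $v\notin N^+(S_i\cap I)$; since all in-neighbors of $v$ lie in $S_i$ and none of them belongs to $I$, the vertex $v$ has no in-neighbor in $I$, and the only way for the dominating set $I$ to dominate $v$ is then to contain it, giving $v\in I$. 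Note that $N^+(S_i\cap I)\subseteq S_{i+1}$ by the adjacency hypothesis, so the set difference is genuinely taken within $S_{i+1}$ and the two inclusions combine to the claimed equality.

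Next I would iterate. The identity determines $S_{i+1}\cap I$ from $S_i\cap I$; applying it again with $i$ replaced by $i+1$ determines $S_{i+2}\cap I$, and after $h-1$ applications (indices taken modulo $h$) every intersection $S_j\cap I$ is determined. Since the parts partition $V(D)$, we recover $I=\bigcup_{j=0}^{h-1}(S_j\cap I)$, and hence $I$ is completely determined by $S_i\cap I$ alone.

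I do not expect a serious obstacle; essentially all of the content sits in the key identity, whose only delicate point is invoking independence and domination separately in the two inclusions. One might worry that propagating all the way around the cycle back to $S_i$ imposes a consistency condition that could fail, but this is a non-issue for the present claim: $I$ is assumed to be a genuine independent dominating set, so the recursion merely reconstructs data that is already consistent. We are asserting determination of $I$ from its trace on one part, not existence of an independent dominating set from an arbitrary seed.
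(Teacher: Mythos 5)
Your proposal is correct and follows essentially the same route as the paper's proof: establish $S_{i+1}\cap I = S_{i+1}\setminus N^{+}(S_i\cap I)$ using that all in-neighbors of $S_{i+1}$ lie in $S_i$, then iterate around the cyclic partition. Your version is slightly more careful in that it spells out both inclusions (independence for one direction, domination for the other), where the paper only makes the domination direction explicit.
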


\begin{proof}
Suppose that we have a digraph $D$ with period $h$ decomposed as in the statement, and we have $S_i \cap I = X$ for some $X \subseteq V(D)$.
Notice that the only vertices which can dominate $S_{i+1}$ are vertices of $S_i$ or vertices of $S_{i+1}$.
Therefore, any vertex in $S_{i+1} \setminus N^+(X) \in I$. 
Hence we have determined $S_{i+1} \cap I = S_{i+1} \setminus N^+(X)$.
By the same argument we can now construct $S_{i+2} \cap I$, and taking one step at a time $S_{i+k}$ for any $1\leq k$. 
\end{proof}

\begin{theorem}
For each $h$, there exists an infinite family of graphs $\mathcal{G}$ with period $h$ such that for all $G \in \mathcal{G}$, $G$ has an independent dominating set.
\end{theorem}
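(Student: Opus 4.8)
The plan is to reuse the family $D_{h,k}$ from Lemma \ref{thm:const}, flipping a single edge-selection rule so that the parity obstruction that made those graphs IDS-free disappears. The conceptual engine is the lemma immediately preceding this theorem: in any graph whose vertices split into independent sets $S_0,\dots,S_{h-1}$ with all arcs going from $S_i$ to $S_{i+1}$ (indices mod $h$), an independent dominating set $I$ is completely determined by $X := S_0\cap I$, via the forced rule $S_{i+1}\cap I = S_{i+1}\setminus N^+(S_i\cap I)$. Running this rule once around the cycle defines an operator $\Phi$ on subsets of $S_0$, and one checks directly that $X$ extends to an IDS if and only if $\Phi(X)=X$: each forced step already avoids arcs inside $I$ and dominates the next layer, so the only conditions left—independence across the wrap-around arcs $S_{h-1}\to S_0$ and domination of $S_0$—are together equivalent to $X = S_0\setminus N^+(S_{h-1}\cap I)$, i.e. to $X$ being a fixed point of $\Phi$. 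Thus producing graphs with an IDS amounts to engineering a fixed point, and the graphs of Lemma \ref{thm:const} are precisely the case where $\Phi$ has none.

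I would first dispose of the easy ranges of $h$. For even $h$, Theorem \ref{thm:eper} already guarantees an IDS, so any infinite supply of period-$h$ graphs works, for instance the layered orientations of $C_h$ with complete bipartite arcs between consecutive layers of growing size. The acyclic convention $h=0$ is handled by Theorem \ref{thm:DAG}, and for $h=1$ a small strongly connected gadget of coprime cycle lengths (a vertex carrying both a digon and a directed triangle, whose apex is a one-vertex IDS) has an IDS, and disjoint copies give an infinite period-$1$ family. The substance is therefore odd $h\ge 3$.

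For odd $h\ge 3$ I would modify $D_{h,k}$ by changing only the return rule from the last subset layer to $S_0$, replacing ``$v\notin Y$'' by ``$v\in Y$'', and call the result $D'_{h,k}$. I would then take the candidate $X=\{1\}$ and trace the forced propagation: through the initial segment the rule sends $\{1\}$ to $\{X : 1\notin X\}$ on $S_1$, then to $\{Y : 1\in Y\}$ on $S_2$, then back to $\{1\}$ on $S_3$; each interior block (a number layer, a subset layer with the ``$\notin$'' rules, a number layer) acts as the identity on subsets of $[k]$ and hence preserves $\{1\}$; and the modified final step sends $S_{h-2}\cap I=\{1\}$ to $S_{h-1}\cap I=\{Y : 1\notin Y\}$ and then, now using $v\in Y$, back to exactly $\{1\}$. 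Hence $\Phi(\{1\})=\{1\}$, so by the reduction above the induced set is an IDS: independence across the wrap is immediate since no $Y$ with $1\notin Y$ carries an arc to $1$, and every $v\ge 2$ in $S_0$ is dominated from the singleton $\{v\}\in S_{h-1}\cap I$. Varying $k\ge 2$ then yields infinitely many such graphs, of strictly growing order, giving the family $\mathcal{G}$.

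The main obstacle I anticipate is not the fixed-point computation, which is clean, but the bookkeeping around it: verifying that a single flip leaves the period exactly $h$ and the graph strongly connected. Since every arc still runs from $S_i$ to $S_{i+1}$, every closed walk has length divisible by $h$, so it suffices to exhibit one surviving directed $h$-cycle in $D'_{h,k}$ and to re-check reachability; I would do this explicitly along the vertices $1$ and $\{1\}$ through the layers. The second delicate point, mirroring Lemma \ref{thm:const}, is to confirm that $X=\{1\}$ is (up to the symmetry of $S_0$) the only value the propagation can take, so that one genuinely reads off a valid IDS rather than a spurious partial assignment, using Lemma \ref{lem:nonem} to exclude the empty and full layers exactly as in the IDS-free construction.
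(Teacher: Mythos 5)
Your proposal is correct and follows essentially the same route as the paper: both take the IDS-free construction $D_{h,k}$ of Lemma \ref{thm:const}, flip exactly one membership rule near the wrap-around (the paper changes the $S_{h-2}\to S_{h-1}$ rule to ``$u\notin X$'', you change the $S_{h-1}\to S_0$ rule to ``$v\in Y$''), and verify that $S_0\cap I=\{1\}$ propagates around the layers back to itself, which by the preceding lemma yields an independent dominating set. Your explicit fixed-point formulation of that lemma and your handling of the easy cases $h\in\{0,1\}$ and $h$ even are somewhat more careful than the paper's terse argument, but the underlying mechanism is identical.
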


\begin{proof}
We follow the construction in \ref{thm:const}, but instead draw edges by from $u \in S_{h-2}$ to $S_{h-1}$ if and only if $u \notin S$. 
We note then that the independent dominating set defined by $S_0 \cap D = 1$ is an independent dominating set. 
In particular, we see that from $S_3$ onward, we alternate $S_i \cap D$ between the set $1$, and the set of all subsets not including one based on parity. 
\end{proof}

\end{section}

\begin{section}{Conclusion and Further Questions}
In this paper, we expanded on independent domination theory in directed graphs by providing a generalization of several of Cary, Cary, and Prabhu's original results, by showing that directed acyclic graphs with even period have independent dominating sets and allowing anti-parallel edges. 
We then initialize the study of time complexity of independent domination in directed graphs.
We note that determining the independent domination number of a directed graph is $NP$-complete, but focus on the question of existence of an independent domination set. 
We prove that for certain classes of graphs, the existence of independent dominating sets is in $P$, and provide an exponential time algorithm for the class of graphs with odd period greater than $1$.
We finally provide constructions of graphs that show that the directed analogue of Vizing's Conjecture for independent dominating sets does not hold. 

There are many significant questions which arise from this paper.
In the area of complexity, since the existence of independent dominating sets in graphs is trivial, it would be interesting to determine whether or not existence in digraphs is in $P$.
Additionally, providing a classification for graphs which contain no independent dominating set, or a proof that the question is $NP$-complete would be of significant interest since it would provide an seemingly difficult digraph question whose analogous graph question is trivial.
Determining the difficulty of classification of graphs with a fixed idomatic number is a natural first place to expand our study.
We wonder also under what restrictions an analogue of Vizing's Conjecture that might hold, for example forcing that all graphs and their cartesian products contain independent dominating sets.
As Cary, Cary, and Prabhu suggest, studying how the reversal or addition of a single edge can alter the idomatic number, which is of interest because of the application of independent domination in ad-hoc networks.

\end{section}

\bibliographystyle{plain}

\bibliography{diids}

\end{document}